\newtheorem{theorem}{Theorem}[section]
\newtheorem*{theorem*}{Theorem}
\newtheorem{lemma}[theorem]{Lemma}
\newtheorem{proposition}[theorem]{Proposition}
\newtheorem{corollary}[theorem]{Corollary}
\theoremstyle{definition}
\newtheorem{definition}[theorem]{Definition}
\newtheorem{remark}[theorem]{Remark}
\theoremstyle{plain}
\newcommand{\C}{\mathbb{C}}
\newcommand{\R}{\mathbb{R}}
\newcommand{\N}{\mathbb{N}}
\newcommand{\Z}{\mathbb{Z}}
\DeclareMathOperator{\mesh}{mesh}
\DeclareMathOperator{\lmesh}{lmesh}
\DeclareMathOperator{\cone}{cone}
\DeclareMathOperator*{\E}{\mathbb{E}}
\title{Connecting the $q$-Multiplicative Convolution and the Finite Difference Convolution}
\author{Jonathan Leake \\ Nick Ryder \\}
\date{}
\begin{document}
\maketitle

\begin{abstract}
    In \cite{finitemesh}, Br{\"a}nd{\'e}n, Krasikov, and Shapiro consider root location preservation properties of finite difference operators. To this end, the authors describe a natural polynomial convolution operator and conjecture that it preserves root mesh properties. We prove this conjecture using two methods. The first develops a novel connection between the additive (Walsh) and multiplicative (Grace-Szeg{\"o}) convolutions, which can be generically used to transfer results from multiplicative to additive. We then use this to transfer an analogous result, due to Lamprecht \cite{lamprecht}, which demonstrates logarithmic root mesh preservation properties of a certain $q$-multiplicative convolution operator. The second method proves the result directly using a modification of Lamprecht's proof of the logarithmic root mesh result. We present his original argument in a streamlined fashion and then make the appropriate alterations to apply it to the additive case.
\end{abstract}

\section{Introduction}


Let $\C[x]$ denote the space of polynomials with complex coefficients, and let $\C_n[x]$ denote the subspace of polynomials in $\C[x]$ of degree at most $n$. (We make the analogous definitions for $\R[x]$ and $\R_n[x]$.) The Walsh additive (\cite{walsh}) and Grace-Szeg{\"o} multiplicative (\cite{szego}) polynomial convolutions on $f,g \in \C_n[x]$ have been denoted $\boxplus^n$ and $\boxtimes^n$ respectively (e.g., in \cite{finiteconvolutions}):
\[
    f \boxplus^n g := \sum_{k=0}^n \partial_x^k f \cdot (\partial_x^{n-k} g)(0)
\]
\[
    f \boxtimes^n g := \sum_{k=0}^n \binom{n}{k}^{-1} (-1)^k f_k g_k x^k
\]
This notation is suggestive, as these convolutions can be thought of producing polynomials whose roots are contained in the (Minkowski) sum and product of complex discs containing the roots of the input polynomials. When the inputs have real roots (additive) or non-negative roots (multiplicative), this fact also holds in terms of real intervals containing the roots.

In \cite{finitemesh}, the authors show that the additive convolution can only increase root mesh, which is defined as the minimum absolute difference between any pair of roots of a given polynomial. That is, the mesh of the output polynomial is at least as large as the mesh of either of the input polynomials. They use similar arguments to show that, for polynomials with non-negative roots, the multiplicative convolution can only increase logarithmic root mesh. This is similarly defined as the minimum ratio (greater than 1) between any pair of positive roots of a given polynomial.

Regarding mesh and logarithmic mesh, there are natural generalized convolution operators which also preserve such properties. The first will be called the $q$-multiplicative convolution, and it was shown to preserve logarithmic root mesh of at least $q$ in \cite{lamprecht}. The second will be called the $b$-additive convolution (or finite difference convolution), and the main concern of this paper is to demonstrate that it preserves root mesh of at least $b$.

\subsection*{The $q$-Multiplicative Convolution}

In \cite{lamprecht}, Lamprecht proves logarithmic mesh preservation properties of $q$-multiplicative convolution. This convolution is defined as follows, where $p_k$ an $r_k$ are the coefficients of $p$ and $r$, respectively. (Note that as $q \to 1$ this limits to the classical multiplicative convolution.)
\[
    p \boxtimes_q^n r := \sum_{k=0}^n \binom{n}{k}_q^{-1} q^{-\binom{k}{2}} (-1)^k p_k r_k x^k
\]
Here, $\binom{n}{k}_q$ denotes the $q$-binomial coefficients, to be explicitly defined later. We state his result formally as follows.

\begin{definition}
    Fix $p \in \R[x]$ with all non-negative roots. We say that $p$ is \emph{$q$-log mesh} if the minimum ratio (greater than 1) between any pair of non-zero roots of $p$ is at least $q$. We also say that $p$ is \emph{strictly $q$-log mesh} if the minimum ratio is greater than $q$. In these situations, we write $\lmesh(p) \geq q$ and $\lmesh(p) > q$ respectively.
\end{definition}

\begin{theorem}[Lamprecht]
\label{thm:qlogmesh}
    Let $p$ and $r$ be polynomials of degree at most $n$ such that $\lmesh(p) \geq q$ and $\lmesh(r) \geq q$, for some $q \in (1,\infty)$. Then, $\lmesh(p \boxtimes_q^n r) \geq q$.
\end{theorem}

This result is actually an analogue to an earlier result of Suffridge \cite{suffridge} regarding polynomials with roots on the unit circle. In Suffridge's result, $q$ is taken to be an element of the unit circle, and log mesh translates to mean that the roots are pairwise separated by at least the argument of $q$. Roughly speaking, he obtains the same result for the corresponding $q$-multiplicative convolution. Remarkably, the known proofs of his result (even a proof of Lamprecht) differ fairly substantially from Lamprecht's proof of the above theorem.

Additionally, we note here that Lamprecht uses different notation and conventions in \cite{lamprecht}. In particular, he uses $q \in (0,1)$, considers polynomials $p$ with all non-positive roots, and his definition of $\boxtimes_q^n$ does not include the $(-1)^k$ factor. These differences are generally speaking unsubstantial, but it is worth noting that the arguments of \S\ref{sect:connect} seem to require the $(-1)^k$ factor.

\subsection*{The $b$-Additive Convolution}

In this paper, we show the $b$-additive convolution (or, finite difference convolution) preserves the space of polynomials with root mesh at least $b$. Br{\"a}nd{\'e}n, Krasikov, and Shapiro define the $b$-additive convolution (only for $b=1$) in \cite{finitemesh} as follows. (Note that as $b \to 0$ this limits to the classical additive convolution.)
\[
    p \boxplus_b^n r := \sum_{k=0}^n \Delta_b^k p \cdot (\Delta_b^{n-k} r)(0)
\]
Here, $\Delta_b$ is a finite $b$-difference operator, defined as:
\[
    \Delta_b : p \mapsto \frac{p(x) - p(x-b)}{b}
\]
Our result then solves the first (and second) conjecture stated in \cite{finitemesh}. We state it formally here.

\begin{definition}
    Fix $p \in \R[x]$ with all real roots. We say that $p$ is \emph{$b$-mesh} if the minimum non-negative difference of any pair of roots of $p$ is at least $b$. We also say that $p$ is \emph{strictly $b$-mesh} if this minimum difference is greater than $b$. In these situations, we write $\mesh(p) \geq b$ and $\mesh(p) > b$ respectively.
\end{definition}

\begin{theorem}
\label{thm:main}
    Let $p$ and $r$ be polynomials of degree at most $n$ such that $\mesh(p) \geq b$ and $\mesh(r) \geq b$, for some $b \in (0,\infty)$. Then, $\mesh(p \boxplus_b^n r) \geq b$.
\end{theorem}

As a note, Br{\"a}nd{\'e}n, Krasikov, and Shapiro actually use the \emph{forward} finite difference operator in their definition of the convolution. This is not a problem as our result then differs from their conjecture by a shift of the input polynomials.

\begin{remark}
    Although the $q$-multiplicative and $b$-additive convolutions preserve $q$-log mesh and $b$-mesh respectively, they do not preserve real-rootedness.
\end{remark}

\subsection*{An Analytic Connection}

Our first method of proof of Theorem \ref{thm:main} will demonstrate a way to pass root properties of the $q$-multiplicative convolution to the $b$-additive convolution. As this is interesting in its own right, we state the most general version of this result here.

\begin{theorem}
\label{thm:qbancon}
    Fix $b \geq 0$ and let $p,r$ be polynomials of degree $n$. We have the following, where convergence is uniform on compact sets.
    \[
        \lim_{q \rightarrow 1} (1-q)^n \left[E_{q,b}(p) \boxtimes_{q^b}^n E_{q,b}(r) \right](q^x) = p \boxplus_b^n r
    \]
    Note that for $b=0$, this result pertains to the classical convolutions. 
\end{theorem}

Here, the $E_{q,b}$ are certain linear isomorphisms of $\C[x]$, to be explicitly defined below. Notice that uniform convergence allows us to use Hurwitz' theorem to obtain root properties in the limit of the left-hand side. That is, any information about how the $q$-multiplicative convolution acts on roots will transfer to some statement about how the $b$-additive convolution acts on roots. As it turns out, a special case of Lamprecht's result (Theorem 3 from \cite{lamprecht}) will become our result (Theorem \ref{thm:main}) in the limit. We discuss this transfer process in more detail in \S\ref{sect:connect}.

\subsection*{Extending Lamprecht's Method}

Our second method of proof of Theorem \ref{thm:main} is an extension of the method used by Lamprecht to prove the log mesh result for the $q$-multiplicative convolution. Specifically, he demonstrates that the $q$-multiplicative convolution preserves a root-interlacing property for $q$-log mesh polynomials. More formally he proves the following result which gives Theorem \ref{thm:main} as a corollary.

\begin{theorem}[Lamprecht Interlacing-Preserving]
\label{thm:qinterlacing}
    Let $f,g \in \R_n[x]$ be $q$-log mesh polynomials of degree $n$ with only negative roots. Let $T_g: \R_n[x] \to \R_n[x]$ be the real linear operator defined by $T_g: r \mapsto r \boxtimes_q^n g$. Then, $T_g$ preserves the set of polynomials whose roots interlace the roots of $f$.
\end{theorem}

We achieve an analogous result for the $b$-additive convolution using techniques similar to those found in Lamprecht's paper. We state it formally here.

\begin{theorem}
\label{thm:binterlacing}
    Let $f,g \in \R_n[x]$ be $b$-mesh polynomials of degree $n$. Let $T_g: \R_n[x] \to \R_n[x]$ be the real linear operator defined by $T_g: r \mapsto r \boxplus_b^n g$. Then, $T_g$ preserves the set of polynomials whose roots interlace the roots of $f$.
\end{theorem}

In both cases, mesh and log mesh properties can be shown to be equivalent to root interlacing properties ($f(x)$ interlaces $f(x-b)$ for $b$-mesh, and $f(x)$ interlaces $f(q^{-1}x)$ for $q$-log mesh). The above theorems then immediately imply the desired mesh preservation properties for the respective convolutions. We discuss this further in \S\ref{sect:lamprecht}.











\section{First Proof Method: The Finite Difference Convolution as a Limit of \texorpdfstring{$q$}{q}-Multiplicative Convolutions}
\label{sect:connect}

In what follows we establish a general analytic connection between the multiplicative (Grace-Szego) and additive (Walsh) convolutions on polynomials of degree at most $n$. We then extend this connection to the $q$-multiplicative convolution and the $b$-additive convolution (Theorem \ref{thm:qbancon}). Using this connection, we transfer root information results of the multiplicative convolution ($q$ or classical) to the additive convolution ($b$ or classical). Specifically, we use this connection to prove Theorem \ref{thm:main}, which is the conjecture of Br{\"a}nd{\'e}n, Krasikov, and Shapiro mentioned above.

To begin we state an observation of Vadim Gorin demonstrating an analytic connection in the classical case using matrix formulations of the classical convolutions given in \cite{finiteconvolutions}:
\[
    \chi(A) \boxtimes^n \chi(B) = \E_P \left[\chi(APBP^T)\right]
\]
\[
    \chi(A) \boxplus^n \chi(B) = \E_P \left[\chi(A + PBP^T)\right]
\]
Here, $A$ and $B$ are real symmetric matrices, $\chi$ denotes the characteristic polynomial, and the expectations are taken over all permutation matrices. We then write:
\[
    \begin{split}
        \lim_{t \to 0} t^{-n} \left[\chi(e^{tA}) \boxtimes^n \chi(e^{tB})\right](tx+1) &= \lim_{t \to 0} t^{-n}\E_P\left[\det\left(txI + I - e^{tA}Pe^{tB}P^T\right)\right] \\
            &= \lim_{t \to 0} t^{-n}\E_P\left[\det\left(txI - t(APP^T + PBP^T) + O(t^2)\right)\right] \\
            &= \lim_{t \to 0} \E_P\left[\chi\left(A + PBP^T + O(t)\right)\right] \\
            &= \chi(A) \boxplus^n \chi(B)
    \end{split}
\]
This connection is suggestive and straightforward, but seemingly confinded to the classical case. Therefore, we instead state below a slightly modified (but equivalent) version of this observation for the classical convolutions (Theorem \ref{thm:clsancon}) which we are able to then generalize to the $q$-multiplicative and $b$-additive convolutions.

\subsection{The Classical Convolutions}

We begin by sketching the proof of the connection between the classical additive and multiplicative convolutions. We then state rigorously the more general result for the $q$-multiplicative and $b$-additive convolutions. In this section, many quantities will be defined with $b=0$ in mind (this corresponds to the classical additive convolution), with the more general quantities given in subsequent sections. Further, we will leave the proofs of the lemmas to the generic $b$ case, omitting them here.






To go from the multiplicative world to the additive world, we use a linear map which acts as an exponentiation on roots, and a limiting process which acts as a logarithm. In particular, we will refer to the following algebra endomorphism on $\C[x]$ as our ``exponential map'':
\[
    E_{q,0}: x \mapsto \frac{1-x}{1-q}
\]
In what follows, any limiting process will mean uniform convergence on compact sets in $\C$, unless otherwise specified. This will allow us to extract analytic information about roots using the classical Hurwitz' theorem. In particular, the following result hints at the analytic information provided by the exponential map $E_{q,0}$.

\begin{proposition} \label{prop:clsexponentation}
    We have the following for any $p \in \C[x]$.
    \[
        \lim_{q \rightarrow 1} [E_{q,0}(p)](q^x) = p
    \]
\end{proposition}
\begin{proof}
    We first consider $[E_{q,0}(x)](q^x) = \frac{1-q^x}{1-q}$, for which we obtain the following by the generalized binomial theorem:
    \[
        \lim_{q \to 1} [E_{q,0}(x)](q^x) = \lim_{q \to 1} \frac{1-q^x}{1-q} = \lim_{q \to 1} \sum_{m=1}^\infty \binom{x}{m}(q-1)^{m-1} = \binom{x}{1} = x
    \]
    To show that convergence here is uniform on compact sets, consider the tail for $|x| \leq M$:
    \[
        \begin{split}
            \left|\sum_{m=2}^\infty \binom{x}{m} (q-1)^{m-1}\right| &\leq \sum_{m=0}^\infty \left|(q-1)^{m+1} \cdot \frac{x(x-1) \cdots (x-m-1)}{(m+2)!}\right| \\
                &\leq |q-1| \sum_{m=0}^\infty |q-1|^m \prod_{k=1}^{m+2} \left(1 + \frac{|x|}{k}\right) \\
                &\leq |q-1| (1+M)^2 \sum_{m=0}^\infty \left(|q-1| (1+M)\right)^m \\
                &= \frac{|q-1| (1+M)^2}{1 - |q-1| (1+M)}
        \end{split}
    \]
    This limits to zero as $q \to 1$, which proves the desired convergence.
    
    Since $E_{q,0}$ is an algebra morphism, we can use the fundamental theorem of algebra to complete the proof. Specifically, letting $p(x) = c_0 \prod_k (x - \alpha_k)$ we have:
    \[
        \lim_{q \to 1}\left[E_{q,0}\left(c_0 \prod_k (x-\alpha_k)\right)\right](q^x) = c_0 \prod_k \left(\lim_{q \to 1} [E_{q,0}(x)](q^x) - \alpha_k\right) = c_0 \prod_k (x-\alpha_k)
    \]
\end{proof}

We now state our result in the classical case, which gives an analytic connection between the additive and multiplicative convolutions. As a note, many of the analytic arguments used in the proof of this result will have a flavor similar to that of the proof of Proposition \ref{prop:clsexponentation}.

\begin{theorem} \label{thm:clsancon}
    Let $p,r \in \C[x]$ be of degree at most $n$. We have the following.
    \[
        \lim_{q \rightarrow 1} (1-q)^n \left[E_{q,0}(p) \boxtimes^n E_{q,0}(r) \right](q^x) = p \boxplus^n r
    \]
\end{theorem}

\subsubsection*{Proof Sketch}

We will establish the above identity by calculating it on basis elements. Specifically, we will expand everything into powers of $(1-q)$. To prove the theorem, it then suffices to show that: (1) the negative degree coefficients are all zero, (2) the series has the desired constant term, and (3) the tail of the series converges to zero uniformly on compact sets. Our first step towards establishing this is expanding $q^{kx}$ in terms of powers of $(1-q)$.

\begin{remark}
    Since we will only be considered with behavior for $q$ near $1$, we will use the notation $q \approx 1$ to indicate there exists some $\epsilon > 0$ such that the statement holds for $q \in (1- \epsilon, 1 + \epsilon)$.
\end{remark}

\begin{definition}
   We define a constant, which will help us to simplify the following computations.
    \[
        \alpha_{q,0} := \frac{\ln q}{1-q}
    \]
    Note that $\lim_{q \rightarrow 1} \alpha_{q, 0} = -1$.
\end{definition}

\begin{lemma} \label{lem:clsqexp}
    Fix $k \in \N_0$. For $q \approx 1$, we have the following.
    \[
        q^{kx} = \sum_{m=0}^\infty \frac{x^m}{m!} \alpha_{q,0}^m k^m (1-q)^m
    \]
    For fixed $q \approx 1$, this series has a finite radius of convergence, and this radius approaches infinity as $q \rightarrow 1$.
\end{lemma}

Notice that this is not a true power series in $(1-q)$, as $\alpha_{q,0}$ depends on $q$. Using this, we can calculate the series obtained after plugging in specific basis elements.

\begin{lemma} \label{lem:clsbasiscomp}
    Fix $q \approx 1$ in $\R_+$ and $j,k,n \in \N_0$ such that $0 \leq j \leq k \leq n$. We have the following.
    \[
        (1-q)^n \left[(1-x)^j \boxtimes^n (1-x)^k\right](q^x) = \sum_{m=0}^\infty \frac{x^m}{m!} \alpha_{q,0}^m (1-q)^{n+m-j-k} \sum_{i=0}^j \frac{\binom{j}{i}\binom{k}{i}}{\binom{n}{i}}(-1)^i i^m
    \]
\end{lemma}

We use interpolation arguments to handle the terms of this series, which are combinatorial in nature. In particular we show that this series has no nonzero negative degree terms, as seen in the following.

\begin{proposition}
    Fix $j,k,m,n \in \N_0$ such that $j \leq k$ and $n+m-j-k \leq 0$. We have the following identity.
    \[
        \sum_{i=0}^j \frac{\binom{j}{i}\binom{k}{i}}{\binom{n}{i}}(-1)^i i^m = \left\{
                \begin{array}{ll}
                    (-1)^{n-j-k}\frac{(j)!(k)!}{(n)!} & m = j+k-n \\
                    0 & m < j+k-n
                \end{array}
            \right.
    \]
\end{proposition}

To deal with the tail of the series, we then use crude bounds to get uniform convergence on compact sets.

\begin{lemma}
    Fix $M > 0$, and $j,k,n \in \N_0$ such that $j \leq k \leq n$. For $|x| \leq M$, there exists $\gamma > 0$ such that the following bound holds for $q \in (1-\gamma, 1+\gamma)$.
    \[
        \left|\sum_{m>j+k-n} \frac{x^m}{m!} \alpha_{q,0}^m (1-q)^{n+m-j-k} \sum_{i=0}^j \frac{\binom{j}{i}\binom{k}{i}}{\binom{n}{i}}(-1)^i i^m\right| \leq c_0c_1 \sum_{m=1}^\infty c_2^m |1-q|^m
    \]
    Here, $c_0,c_1,c_2$ are independent of $q$.
\end{lemma}

With this, we can now complete the proof of the theorem by comparing the desired quantity to the constant term in our series in $(1-q)$.

\begin{proof}[Proof of Theorem \ref{thm:clsancon}.]
    For $j,k,n \in \N_0$ such that $0 \leq j \leq k \leq n$, we can combine the above results to obtain the following. Recall that $\lim_{q \rightarrow 1} \alpha_{q,0} = -1$.
    \[
        \lim_{q \rightarrow 1} (1-q)^n \left[(1-x)^j \boxtimes^n (1-x)^k\right](q^x) = \frac{j!k!}{n!(j+k-n)!} x^{j+k-n} = x^j \boxplus^n x^k
    \]
    By symmetry, this demonstrates the desired result on a basis. Therefore, the proof is complete.
\end{proof}

\subsection{General Connection Preliminaries}

We now prove the previous results in more generality, which allows for extension to these generalized convolutions. First though, we give some preliminary notation.

\begin{definition}
    Fix $q \in \R_+$ and $x \in \C$. We define $(x)_q := \frac{1-q^x}{1-q}$. Note that $\lim_{q \rightarrow 1} (x)_q = x$, using the generalized binomial theorem on $q^x = (1+(q-1))^x$.
\end{definition}

Specifically, for any $n \in \Z$, we have:
\[
    (n)_q := \frac{1-q^n}{1-q} = 1 + q + q^2 + \cdots + q^{n-1}
\]
We then extend this notation to $(n)_q! := (n)_q(n-1)_q \cdots (2)_q(1)_q$ and $\binom{n}{k}_q := \frac{(n)_q!}{(k)_q!(n-k)_q!}$. We also define a system of bases of $\C[x]$ which will help us to understand the mesh convolutions.

\begin{definition}
    For $b \geq 0$ and $q \in \R_+$, we define the following bases of $\C[x]$.
    \[
        v_{q,b}^k := \frac{(1-x)(1-q^bx) \cdots (1-q^{(k-1)b}x)}{(1-q)^k}
    \]
    \[
        \nu_b^k := x(x+b)(x+2b) \cdots (x+(k-1)b)
    \]
\end{definition}

We demonstrate the relevance of these bases to the generalized convolutions by giving alternate definitions. Consider a linear map $A_b$ on $\C[x]$ defined via $A_b : \nu_0^k \mapsto \nu_b^k$. That is, $A_b : x^k \mapsto x(x+b)\ldots(x+(k-1)b)$. We can then define the $b$-additive convolution as follows:
\[
    p \boxplus_b^n r := A_b(A_b^{-1}(p) \boxplus^n A_b^{-1}(r))
\]
That is, the $b$-additive convolution is essentially a change of basis of the classical additive convolution. Note that equivalently, one can conjugate $\partial_x$ by $A_b$ to obtain $\Delta_b : p \mapsto \frac{p(x)-p(x-b)}{b}$ which demonstrates the definition of $\boxplus_b^n$ in terms of finite difference operators.

Similarly, the $q$-multiplicative convolution can be seen as a change of basis of the classical multiplicative convolution. Consider a linear map $M_q^{(n)}$ on $\C[x]$ defined via $M_q^{(n)} : \binom{n}{k}x^k \mapsto \binom{n}{k}_q q^{\binom{k}{2}} x^k$, which has the property that $M_{q^b}^{(n)}: (1-x)^n \mapsto (1-q)^n v_{q,b}^n$. We can then define the $q$-multiplicative convolution as follows:
\[
    p \boxtimes_q^n r := M_q^{(n)}\left((M_q^{(n)})^{-1}(p) \boxtimes^n (M_q^{(n)})^{-1}(r)\right)
\]

These bases will be used to simplify the proof of the general analytic connection for the mesh (non-classical) convolutions. In what follows they will play the role that the basis elements $x^k$ and $(1-x)^k$ did in the classical proof sketch above.

\subsection{Main Result for Mesh Convolutions}

We now generalize the results from the classical ($b=0$) setting. First we define a generalized ``exponential map'' using the basis elements defined above. Note for $b > 0$ these are no longer algebra morphisms.
\[
    E_{q,b}: \nu_b^k \mapsto v_{q,b}^k
\]
Notice that specialization to $b = 0$ recovers the original ``exponential map''. Also notice that for any $p$, the roots of $E_{q,b}(p)$ approach 1 as $q \to 1$ (multiply the output polynomial by $(1-q)^{\deg(p)}$). In all that follows, previously stated results can be immediately recovered by setting $b=0$.

\begin{proposition}
    We obtain the same key relation for the generalized exponential maps:
    \[
        \lim_{q \rightarrow 1} [E_{q,b}(p)](q^x) = p
    \]
\end{proposition}
\begin{proof}
    We compute on basis elements, using Proposition \ref{prop:clsexponentation} in the process:
    \[
        \begin{split}
            \lim_{q \rightarrow 1} [E_{q,b}(\nu_b^k)](q^x) &= \lim_{q \rightarrow 1} [v_{q,b}^k](q^x) \\
                &= \lim_{q \rightarrow 1} \frac{(1-q^x)(1-q^{x+b}) \cdots (1-q^{x+(k-1)})}{(1-q)^k} \\
                &= \prod_{j=0}^{k-1} \lim_{q \rightarrow 1} \frac{1-q^{x+jb}}{1-q} = \prod_{j=0}^{k-1} (x+jb) = \nu_b^k
        \end{split}
    \]
\end{proof}

As in Proposition \ref{prop:clsexponentation}, one can interpret the $E_{q,b}$ maps as a way to exponentiate the roots of a polynomial. The inverse to these maps is given in the previous proposition by plugging in an exponential and limiting, which corresponds to taking the logarithm of the roots. This discussion will be made more precise in \S \ref{sect:applications}.

We now state and prove the main result, which gives an analytic link between the $b$-additive and $q$-multiplicative convolutions. We follow the proof sketch of the classical result given above, breaking the following full proof up into more manageable sections.

\newtheorem*{qbancon}{Theorem \ref{thm:qbancon}}
\begin{qbancon}
    Fix $b \geq 0$ and let $p,r$ be polynomials of degree $n$. We have the following.
    \[
        \lim_{q \rightarrow 1} (1-q)^n \left[E_{q,b}(p) \boxtimes_{q^b}^n E_{q,b}(r) \right](q^x) = p \boxplus_b^n r
    \]
\end{qbancon}

\subsubsection*{Series Expansion}

In order to prove this theorem, we first expand the left-hand side of the expression in a series in $(1-q)^m$. As above, this is not quite a power series in $(1-q)^m$ as $\alpha_{q,b}$ (which we now define) depends on $q$.

\begin{definition}
    We define the $b$-version of the $\alpha_{q,0}$ constants as follows.
    \[
        \alpha_{q,b} := \left\{
            \begin{array}{ll}
                \frac{-(b)_{q^{-1}}}{qb} & b > 0 \\
                \frac{\ln q}{1-q} & b = 0
            \end{array}
        \right.
    \]
    Note that $\lim_{b \rightarrow 0} \alpha_{q,b} = \alpha_{q,0}$ for $q \in \R_+$, and $\lim_{q \rightarrow 1} \alpha_{q,b} = -1$ for fixed $b \geq 0$.
\end{definition}

We now need to understand how exponential polynomials in $q$ relate to our basis elements.

\begin{lemma} \label{lem:qbqexp}
    Fix $b \geq 0$, and $k \in \N_0$. For $q \approx 1$ in $\R_+$, we have the following.
    \[
        q^{kx} = \sum_{m=0}^\infty \frac{\nu_b^m}{m!} \alpha_{q,b}^m (k)_{q^{-b}}^m (1-q)^m
    \]
    For fixed $q \approx 1$, this series has a finite radius of convergence, and this radius approaches infinity as $q \rightarrow 1$.
\end{lemma}
\begin{proof}
    For $b > 0$, we use the generalized binomial theorem to compute:
    \[
        \begin{split}
            q^{kx} = (q^{-bk}-1+1)^{-x/b} &= \sum_{m=0}^\infty \frac{(-b)^{-m}}{m!}x(x+b) \cdots (x+b(m-1)) (q^{-bk}-1)^m \\
                &= \sum_{m=0}^\infty \frac{(-b)^{-m} \nu_b^m}{m!} (k)_{q^{-b}}^m (b)_{q^{-1}}^m (q^{-1}-1)^m \\
                &= \sum_{m=0}^\infty \frac{\nu_b^m}{m!} \left(-\frac{(b)_{q^{-1}}}{qb}\right)^m (k)_{q^{-b}}^m (1-q)^m
        \end{split}
    \]
    For $b = 0$, manipulating the Taylor series of $q^{kx} = e^{kx \ln q}$ gives the result.
    
    For fixed $q \approx 1$, let $\delta > 0$ be small enough such that $|\alpha_{q,b}| < \frac{1+\delta}{q}$ and $(k)_{q^{-b}} < k + \delta$. Consider:
    \[
        |\nu_b^m| = |x(x+b)\cdots(x+(m-1)b)| \leq m!(|x|+b)^m
    \]
    From this, we obtain:
    \[
        |q^{kx}| \leq \sum_{m=0}^\infty \left(\frac{(|x|+b)(1+\delta)(k+\delta)}{q}\right)^m |1-q|^m
    \]
    It is then easy to see that the radius of convergence of this series limits to infinity as $q \rightarrow 1$.
\end{proof}

We will now proceed by proving the main result on a basis. To that end, we will prove a number of results related to basis element computations. Most of these are rather tedious and not very illuminating. Perhaps this can be simplified through some more detailed and generalized theory of $q$- and $b$-polynomial operators.

\begin{lemma} \label{lem:qbbasiscomp}
    Fix $q \approx 1$ in $\R_+$, $b \geq 0$, and $j,k,n \in \N_0$ such that $0 \leq j \leq k \leq n$. We have the following.
    \[
        (1-q)^n \left[v_{q,b}^j \boxtimes_{q^b}^n v_{q,b}^k\right](q^x) = \sum_{m=0}^\infty \frac{\nu_b^m}{m!} \alpha_{q,b}^m (1-q)^{n+m-j-k} \sum_{i=0}^j \frac{\binom{j}{i}_{q^b}\binom{k}{i}_{q^b}}{\binom{n}{i}_{q^b}}q^{bi(i-1)/2}(-1)^i (i)_{q^{-b}}^m
    \]
\end{lemma}
\begin{proof}
    We compute:
    \[
        \begin{split}
            (1-q)^n \left[v_{q,b}^j \boxtimes_{q^b}^n v_{q,b}^k\right](q^x) &= (1-q)^{n-j-k} \sum_{i=0}^j \frac{\binom{j}{i}_{q^b}\binom{k}{i}_{q^b}}{\binom{n}{i}_{q^b}}q^{bi(i-1)/2}(-1)^i q^{ix} \\
                &= \sum_{i=0}^j \frac{\binom{j}{i}_{q^b}\binom{k}{i}_{q^b}}{\binom{n}{i}_{q^b}}q^{bi(i-1)/2}(-1)^i \sum_{m=0}^\infty \frac{\nu_b^m}{m!} \alpha_{q,b}^m (1-q)^{n+m-j-k} (i)_{q^{-b}}^m \\
                &= \sum_{m=0}^\infty \frac{\nu_b^m}{m!} \alpha_{q,b}^m (1-q)^{n+m-j-k} \sum_{i=0}^j \frac{\binom{j}{i}_{q^b}\binom{k}{i}_{q^b}}{\binom{n}{i}_{q^b}}q^{bi(i-1)/2}(-1)^i (i)_{q^{-b}}^m
        \end{split}
    \]
\end{proof}

\subsubsection*{Q-Lagrange Interpolation}

To prove convergence in Theorem \ref{thm:qbancon}, we break up the infinite sum of Lemma \ref{lem:qbbasiscomp} into two pieces. For $n+m-j-k \leq 0$, we use an interpolation argument to obtain the following identity. Note that this generalizes a similar identity (for $q=1$) found in \cite{qlagrange_mathof}.

\begin{proposition}
\label{prop:qlagrange}
    Fix $q \approx 1$, $b \geq 0$, and $j,k,m,n \in \N_0$ such that $j \leq k$ and $n+m-k \leq j$. We have the following identity.
    \[
        \sum_{i=0}^j \frac{\binom{j}{i}_{q^b}\binom{k}{i}_{q^b}}{\binom{n}{i}_{q^b}}q^{bi(i-1)/2}(-1)^i (i)_{q^{-b}}^m = \left\{
                \begin{array}{ll}
                    (-1)^{n-j-k}q^{b\binom{n}{2}-b\binom{j}{2}-b\binom{k}{2}}\frac{(j)_{q^b}!(k)_{q^b}!}{(n)_{q^b}!} & m = j+k-n \\
                    0 & m < j+k-n
                \end{array}
            \right.
    \]
\end{proposition}

We first give a lemma, and then the proof of the proposition will follow. Let $[t^j]p(t)$ denote the coefficient of $p$ corresponding to the monomial $t^j$.

\begin{lemma}\label{interpolate}
    Fix $p \in \C_j[x]$. We have the following identity:
    \[
        (-1)^j q^{-\binom{j}{2}} \cdot [t^j]p(t) = \sum_{i=0}^j p((i)_{q^{-1}}) \frac{(-1)^{i}}{(i)_q! (j-i)_q!} q^{\binom{i}{2}}
    \]
\end{lemma}
\begin{proof}
    Using Lagrange interpolation, the following holds for any polynomial of degree at most $j$:
    \[
        [t^j]p(t) = \sum_{i=0}^j p((i)_q) \frac{(-1)^{j-i}}{(i)_q! (j-i)_q!} q^{-\binom{i}{2}} q^{-i(j-i)}
    \]
    Using the identity $(i)_{q^{-1}}! = q^{-\binom{i}{2}} (i)_q!$ (via $(i)_{q^{-1}} = q^{-i+1} (i)_q$) and replacing $q$ by $q^{-1}$ gives:
    \[
        \begin{split}
            [t^j]p(t) &= \sum_{i=0}^j p((i)_{q^{-1}}) \frac{(-1)^{j-i}}{(i)_q! (j-i)_q!} q^{2\binom{i}{2}} q^{i(j-i)} q^{\binom{j-i}{2}} \\
                &= \sum_{i=0}^j p((i)_{q^{-1}}) \frac{(-1)^{j-i}}{(i)_q! (j-i)_q!} q^{\binom{i}{2}} q^{\binom{j}{2}}
        \end{split}
    \]
    The result follows.
\end{proof}

\begin{proof}[Proof of Proposition \ref{prop:qlagrange}]
    Consider the polynomial $p(t) = t^m((n)_{q^{-b}}-t)((n-1)_{q^{-b}}-t) \cdots ((k+1)_{q^{-b}}-t)$, which is of degree $m+n-k \leq j$. So, $[t^j]p(t) = (-1)^{n-k}\delta_{m=j+k-n}$. Also, recall the identity $(i)_{q^{-b}}! = q^{-b\binom{i}{2}} (i)_{q^b}!$. Using the previous lemma and replacing $q$ by $q^b$, we obtain:
    \[
        \begin{split}
            (-1)^{j} q^{-b\binom{j}{2}} \cdot (-1)^{n-k}\delta_{m=j+k-n} &= \sum_{i=0}^{j} p((i)_{q^{-b}}) \frac{(-1)^{i}}{(i)_{q^b}! (j-i)_{q^b}!} q^{b\binom{i}{2}} \\
                &= \sum_{i=0}^{j} q^{-b i (n-k)}  \frac{(n-i)_{q^{-b}}!}{(k-i)_{q^{-b}}!}\frac{(-1)^{i}}{(i)_{q^b}! (j-i)_{q^b}!} q^{b\binom{i}{2}}(i)_{q^{-b}}^m \\
                &= \sum_{i=0}^{j} q^{b \binom{k}{2}} q^{-b \binom{n}{2}} \frac{(n-i)_{q^{b}}!}{(k-i)_{q^{b}}!}\frac{(-1)^{i}}{(i)_{q^b}! (j-i)_{q^b}!} q^{b\binom{i}{2}}(i)_{q^{-b}}^m \\
                &= \sum_{i=0}^j q^{b \binom{k}{2}} q^{-b \binom{n}{2}} \frac{(n)_{q^b}}{(j)_{q^b} (k)_{q^b}} \cdot \frac{\binom{j}{i}_{q^b} \binom{k}{i}_{q^b}}{\binom{n}{i}_{q^b}} q^{b\binom{i}{2}} (-1)^i (i)_{q^{-b}}^m
        \end{split}
    \]
    The result follows.
\end{proof}

\subsubsection*{Tail of the Series}

For $n+m-j-k > 0$, we show that the tail of the infinite series in Lemma \ref{lem:qbbasiscomp} is bounded by a geometric series in $\epsilon \rightarrow 0$ as $q \rightarrow 1$. The proof, is somewhat similar to the discussion of convergence in the proof of Lemma \ref{lem:qbqexp}.

\begin{lemma}
    Fix $b \geq 0$, $M > 0$, and $j,k,n \in \N_0$ such that $j \leq k \leq n$. For $|x| \leq M$, there exists $\gamma > 0$ such that the following bound holds for $q \in (1-\gamma, 1+\gamma)$.
    \[
        \left|\sum_{m>j+k-n} \frac{\nu_b^m}{m!} \alpha_{q,b}^m (1-q)^{n+m-j-k} \sum_{i=0}^j \frac{\binom{j}{i}_{q^b}\binom{k}{i}_{q^b}}{\binom{n}{i}_{q^b}}q^{bi(i-1)/2}(-1)^i (i)_{q^{-b}}^m\right| \leq c_0c_1 \sum_{m=1}^\infty c_2^m |1-q|^m
    \]
    Here, $c_0,c_1,c_2$ are independent of $q$.
\end{lemma}
\begin{proof}
Fix $n+m-j-k > 0$ with $j \leq k \leq n$ and $q \approx 1$. We have the following bound, where $c_0$ is some positive constant independent of $q$:
\[
    \left|\sum_{i=0}^j \frac{\binom{j}{i}_{q^b}\binom{k}{i}_{q^b}}{\binom{n}{i}_{q^b}}q^{bi(i-1)/2}(-1)^i (i)_{q^{-b}}^m\right| \leq \sum_{i=0}^j c_0 (i+\delta)^m \leq c_0 (n+\delta)^{m+1}
\]
For $|x| \leq M$, we have:
\[
    |\nu_b^m| = |x(x+b)\cdots(x+(m-1)b)| \leq |M(M+b)\cdots(M+(m-1)b)| \leq m!(M+b)^m
\]
This then implies the following bound on the tail. Let $c_1 := (n+\delta) \big[(1+\delta)(M+b)(n+\delta)\big]^{j+k-n}$ and $c_2 := (1+\delta)(M+b)(n+\delta)$, where small $\delta > 0$ is needed to deal with limiting details.
\[
    \begin{split}
        \Bigg|\sum_{m=j+k+1-n}^\infty \frac{\nu_b^m}{m!} \alpha_{q,b}^m &(1-q)^{n+m-j-k} \sum_{i=0}^j \frac{\binom{j}{i}_{q^b}\binom{k}{i}_{q^b}}{\binom{n}{i}_{q^b}}q^{bi(i-1)/2}(-1)^i (i)_{q^{-b}}^m\Bigg|  \\
            &\leq \sum_{m=j+k+1-n}^\infty \left|\frac{\nu_b^m}{m!} \alpha_{q,b}^m (1-q)^{n+m-j-k}\right| c_0(n+\delta)^{m+1} \\
            &\leq c_0\sum_{m=j+k+1-n}^\infty (1+\delta)^m(M+b)^m |1-q|^{n+m-j-k}(n+\delta)^{m+1} \\
            &\leq c_0c_1 \sum_{m=j+k+1-n}^\infty \big[(1+\delta)(M+b)(n+\delta)|1-q|\big]^{n+m-j-k} \\
            &= c_0c_1 \sum_{m=1}^\infty c_2^m |1-q|^m
    \end{split}
\]
So, for any $\epsilon > 0$ we can select $q$ close enough to 1 such that $|1-q| < \frac{\epsilon}{c_2}$. This implies the above series is geometric with terms bounded by $\epsilon^m$.

\end{proof}

The above lemma in particular demonstrates that the tail of the series in Lemma \ref{lem:qbbasiscomp} converges to 0 uniformly on compact sets. With this, we can now complete the proof of the theorem.

\begin{proof}[Proof of Theorem \ref{thm:qbancon}.]
    For $j,k,n \in \N_0$ such that $0 \leq j \leq k \leq n$, we can combine the above results. When we expand our limit as a sum of powers of $(1-q)$, we have shown that everything limits to zero except for the constant term. Recall that $\lim_{q \rightarrow 1} \alpha_{q,b} = -1$.
    \[
    \begin{split}
        \lim_{q \rightarrow 1} (1-q)^n \left[v_{q,b}^j \boxtimes_{q^b}^n v_{q,b}^k\right](q^x) &= \lim_{q \to 1} \frac{\nu_{b}^{j+k-n} \alpha_{q,b}^{j+k-n}}{(j+k-n)!}(-1)^{j+k-n}q^{b\binom{n}{2}-b\binom{j}{2}-b\binom{k}{2}}\frac{(j)_{q^b}!(k)_{q^b}!}{(n)_{q^b}!} \\
        &= \frac{j!k!}{n!(j+k-n)!} \nu_b^{j+k-n} \\
        &= \nu_b^j \boxplus_b^n \nu_b^k\\
    \end{split}
    \]
    By symmetry, this demonstrates the desired result on a basis. Therefore, the proof is complete.
\end{proof}

\subsection{Applications To Previous Results}
\label{sect:applications}

The main motivation for the multiplicative to additive convolution connection was to be able to relate seemingly analogous root information results. The following table outlines the results we proceed to connect.
\[
    \begin{tabular}{ c | c }
        Additive Convolution & Multiplicative Convolution \\ \hline
        Preserves Real Rooted Polynomials & Preserves Positive Rooted Polynomials \\
        Additive Max Root Triangle Inequality & Multiplicative Max Root Triangle Inequality \\
        Preserves $b-$Mesh & Preserves $q-$Logarithmic Mesh \\ 
    \end{tabular}
\]
All of these connections have a similar flavor, and rely on the following elementary facts about exponential polynomials. We say $f(x) = \sum_{k=0}^n c_k q^{kx}$ is an \emph{exponential polynomial of degree $n$ with base $q$}. A real number $x$ is a root of $f$ if and only if $q^x$ is a root of $\sum_{k=0}^n c_k x^k$. Because of this we can bootstrap the fundamental theorem of algebra. 

\begin{definition}
    We call $\left\{x \in \C : \frac{-\pi}{|\ln(q)|} < \Im(x) < \frac{\pi}{|\ln(q)|}\right\}$ the \emph{principal strip} (with respect to $q$). Let $p(q^x)$ be an exponential polynomial of degree $n$ with base $q$. The number of roots of $p(q^x)$ in the principal strip is the same as the number of roots of $p$ in $\C \setminus (-\infty,0]$. We call the roots in the principal strip the \emph{principal roots}.
\end{definition}

\begin{lemma}
    The principal roots of $E_{q,b}(p)[q^x]$ converge to the roots of $p$ as $q \to 1$. In particular, $E_{q,b}(p)[q^x]$ has $\deg(p)$ principal roots for $q \approx 1$.
\end{lemma}
\begin{proof}
    This follows from the fact that, as $q \to 1$, $E_{q,b}(p)[q^x]$ converges uniformly on compact sets to $p$ and the principal strip grows towards the whole plane. 
\end{proof}

We can analyze the behavior of this convergence when $p$ is real rooted with distinct roots.

\begin{lemma} \label{lem:realrootconverge}
    Suppose $p$ is real with real distinct roots. For $q \approx 1$, we have that $E_{q,b}(p)[q^x]$ has principal roots which are real and distinct (and converging to the roots of $p$).
\end{lemma}
\begin{proof}
    Since $p$ has real coefficients, the roots of $E_{q,b}(p)[q^x]$ are either real or come in conjugate pairs. (Consider the fact that $q^{\overline{x}} = \overline{q^x}$.) If $p$ has real distinct roots, the previous lemma implies the principal roots of $E_{q,b}(p)[q^x]$ have distinct real part for $q$ close enough to 1. Therefore, the principal roots of $E_{q,b}(p)[q^x]$ must all be real.
\end{proof}

If we exponentiate (with base $q$) the principal roots of $E_{q,b}(p)[q^x]$, we get the roots of $E_{q,b}(p)$. So if the principal roots of $E_{q,b}(p)[q^x]$ are real, then the roots of $E_{q,b}(p)$ are positive. Considering the above results, this means that $E_{q,b}$ maps polynomials with distinct real roots to polynomials with distinct positive roots for $q \approx 1$. (In fact, the roots will be near 1.)

\subsubsection*{Root Preservation}

The most classical results about the roots are the following:

\begin{theorem*}[Root Preservation]
\leavevmode
\begin{itemize}
    \item If $p,r \in \mathbb{R}_n[x]$ have positive roots, then $p \boxtimes^n r$ has positive roots.
    \item If $p,r \in \mathbb{R}_n[x]$ have real roots, then $p \boxplus^n r$ has real roots.
\end{itemize}
\end{theorem*}

\noindent Neither of these results are particularly hard to prove, but showing how the additive result follows from the multiplicative serves as a prime example of how our theorem connects results on the roots.

\begin{proof}[Proof of Additive from Multiplicative]
    We can reduce to showing that the additive convolution preserves real rooted polynomials with distinct roots since the closure of polynomials with distinct real roots is all real rooted polynomials.
    
    By Lemma \ref{lem:realrootconverge}, the roots of $E_{q,0}(p)$ are real, distinct, and exponentials of the principal roots of $E_{q,0}[p](q^x)$ for $q \approx 1$. This implies that $E_{q,0}(p)$ has positive real roots. By the multiplicative result, $E_{q,0}(p) \boxtimes^n E_{q,0}(r)$ has positive real roots, and therefore $[E_{q,0}(p) \boxtimes^n E_{q,0}(r)](q^x)$ has real principal roots. By our main result, $(1-q)^n[E_{q,0}(p) \boxtimes^n E_{q,0}(r)](q^x)$ converges to $p \boxplus^n r$. The real-rootedness of $[E_{q,0}(p) \boxtimes^n E_{q,0}(r)](q^x)$ for $q \approx 1$ then implies $p \boxplus^n r$ is real-rooted.
\end{proof}

\subsubsection*{Triangle Inequality}

The next classical theorem relates to the max root of a given polynomial. Given a real-rooted polynomial $p$, let $\lambda({p})$ denote its max root. Given an exponential polynomial $f$ with principal roots all real, let $\lambda({f})$ denote the largest principal root of $f$. Also, denote $\exp_q(\alpha) := q^\alpha$.

\begin{theorem*}[Triangle Inequalities]
    \leavevmode
    \begin{itemize}
        \item Given positive-rooted polynomials $p, r$ we have $\lambda({p \boxtimes^n r}) \leq \lambda({p}) \cdot \lambda({r})$
        \item Given real-rooted polynomials $p, r$ we have $\lambda({p \boxplus^n r}) \leq \lambda({p}) + \lambda({r})$
    \end{itemize}
\end{theorem*}

\noindent As before, neither of these have particularly complicated proofs, but we can use the multiplicative result to deduce the additive result in the following.

\begin{proof}[Proof of Additive from Multiplicative]
    As in the previous proof, we can reduce to showing that the result holds for $p,r$ with distinct roots. For this proof, we only consider $q > 1$.
    
    By Lemma \ref{lem:realrootconverge}, we have that the roots of $E_{q,0}(p)$ are real, distinct, and exponentials of the principal roots of $E_{q,0}[p](q^x)$ for $q \approx 1$. This implies the roots of $E_{q,0}(p)$ are positive for $q \approx 1$. Additionally, notice that $\exp_q(\lambda(f(q^x))) = \lambda(f(p))$ whenever $f$ is positive-rooted. From the multiplicative result and the fact that $\boxtimes^n$ preserves positive-rootednes, we have the following for $q \approx 1$:
    \[
        \begin{split}
            \exp_q(\lambda([E_{q,0}(p) \boxtimes^n E_{q,0}(r)](q^x))) &= \lambda({E_{q,0}(p) \boxtimes^n E_{q,0}(r)}) \\
                &\leq \lambda({E_{q,0}(p)}) \cdot \lambda({E_{q,0}(r)}) \\
                &= \exp_q(\lambda({E_{q,0}[p](q^x)}) + \lambda({E_{q,0}[r](q^x)}))
        \end{split}
    \]
    Therefore, $\lambda([E_{q,0}(p) \boxtimes^n E_{q,0}(r)](q^x)) \leq \lambda(E_{q,0}[p](q^x)) + \lambda(E_{q,0}[r](q^x))$. By our main result, $(1-q)^n[E_{q,0}(p) \boxtimes^n E_{q,0}(r)](q^x)$ converges to $p \boxplus^n r$, and therefore $\lambda([E_{q,0}(p) \boxtimes^n E_{q,0}(r)](q^x))$ converges to $\lambda(p \boxplus^n r)$. Similarly $\lambda({E_{q,0}[p](q^x)})$ converges to $\lambda({p})$, and the result follows.
\end{proof}

\subsubsection*{Application to Mesh Preservation Conjecture}

Recall the log mesh result of Lamprecht in \cite{lamprecht} regarding the $q$-multiplicative convolution.


\newtheorem*{qlogmesh}{Theorem \ref{thm:qlogmesh}}
\begin{qlogmesh}
    Fix $q > 1$. Given positive-rooted polynomials $p,r \in \mathbb{R}_n[x]$ with $\lmesh(p), \lmesh(r) \geq q$, we have:
    \[
        \lmesh(p \boxtimes_q^n r) \geq q
    \]
\end{qlogmesh}

In \cite{finitemesh}, Br{\"a}nd{\'e}n, Krasikov, and Shapiro conjectured the analogous result for the $b$-additive convolution (for $b=1$). Using our connection we will confirm this conjecture:

\newtheorem*{main}{Theorem \ref{thm:main}}
\begin{main}
    Given real-rooted polynomials $p,r \in \mathbb{R}_n[x]$ with $\mesh(p), \mesh(r) \geq b$, we have:
    \[
        mesh(p \boxplus_b^n r) \geq b
    \]
\end{main}
\begin{proof}
    We will prove this claim for polynomials $p, r$ with $\mesh(p), \mesh(r) > b$. Since we can approximate any polynomial with $\mesh(p) = b$ by polynomials with larger mesh, the result then follows.
    
    By Lemma \ref{lem:realrootconverge}, $E_{q,b}[p](q^x)$ has real roots which converge to the roots of $p$ for $q \approx 1$. Since the roots of $p$ satisfy $\mesh(p) > b$, the principal roots of $E_{q,b}[p](q^x)$ will have mesh greater than $b$ for $q \approx 1$. Further, $\lmesh(E_{q,b}(p)) = \exp_q(\mesh(E_{q,b}[p](q^x))) > q^b$. (All of this discussion holds for $r$ as well.) By our main result, we have:
    \[
        \lim_{q \rightarrow 1} (1-q)^n \left[E_{q,b}(p) \boxtimes_{q^b}^n E_{q,b}(r) \right](q^x) = p \boxplus_b^n r
    \]
    By the previous theorem, the $q^b$-multiplicative convolution of $E_{q,b}(p)$ and $E_{q,b}(r)$ has logarithmic mesh at least $q^b$. Precomposition by $q^x$ then yields an exponential polynomial with mesh (of the principal roots) at least $b$. The principal roots of this exponential polynomial then converge to $p \boxplus_b^n r$, and hence $p \boxplus_b^n r$ has mesh at least $b$.
\end{proof}

\section{Second Proof Method: A Direct Proof using Interlacing}
\label{sect:lamprecht}

While the previous framework generically transferred Lamprecht's multiplicative result to prove the conjectured result in the additive realm, one might desire a direct proof to gain insight on the underlying structure of the convolution. In what follows, we first outline the preliminary knowledge required to understand a special case of Lamprecht's argument. Then we outline his approach in the multiplicative case and extend this approach to the additive realm to prove the desired conjecture.

\subsection{Interlacing Preserving Operators}

Given $f,g \in \R[x]$, we say $f \ll g$ iff $f'g-fg' \leq 0$ iff $\left(\frac{f}{g}\right)' \leq 0$ wherever defined. Further, we say $f \ll g$ \emph{strictly} iff $f'g-fg' < 0$. Additionally, it is well known that $f$ and $g$ are real-rooted with interlacing roots iff $f \ll g$ or $g \ll f$. Further, $f$ and $g$ have strictly interlacing roots (no shared roots) iff $f \ll g$ strictly or $g \ll f$ strictly.

Let $\lambda_f$ denote the largest root of $f$. If $f$ and $g$ are monic, then $f \ll g$ implies $\lambda_f \leq \lambda_g$. We give a short proof of this now. If $f$ has a double root at $\lambda_f$, then interlacing implies $g(\lambda_f) = 0$, and therefore $\lambda_f \leq \lambda_g$. Otherwise, consider that $f \ll g$ implies $f'(\lambda_f) \cdot g(\lambda_f) = (f'g-fg')(\lambda_f) \leq 0$. Since $f$ is monic, we have $f'(\lambda_f) > 0$ which in turn implies $g(\lambda_f) \leq 0$. Since $g$ is monic, this implies the result. Note that this further implies that if $f$ and $g$ are monic and $\deg(f) = \deg(g)-1$, then $f$ and $g$ have interlacing roots iff $f \ll g$.

Another classical result allows us to combine interlacing relations. If $f \ll g$ and $f \ll h$, then $f \ll ag+bh$ for any $a,b \in \R_+$. A similar result holds if $g \ll f$ and $h \ll f$. Note also that $f \ll g$ iff $g \ll -f$, and that $af \ll bf$ for all $a,b \in \R$. Finally, the Hermite-Biehler theorem says $af+bg$ is real-rooted for all $a,b \in \R$ iff either $f \ll g$ or $g \ll f$.

\begin{remark}
    A polynomial $f$ with non-negative roots is $q$-log mesh if and only if $f \ll f(q^{-1}x)$ and strictly $q$-log mesh if and only if $f \ll f(q^{-1}x)$ strictly (for $q > 1$). Similarly, a polynomial $f$ with real roots is $b$-mesh if and only if $f \ll f(x-b)$ and strictly $b$-mesh if and only if $f \ll f(x-b)$ strictly (for $b > 0$).
\end{remark}

Now let $f$ and $g$ be of degree $n$, and suppose $f$ has $n$ simple real roots, $\alpha_1,...,\alpha_n$. By partial fraction decomposition, we have:
\[
    \frac{g(x)}{f(x)} = c + \sum_{k=1}^n \frac{c_{\alpha_k}}{x-\alpha_k}
\]
Denoting $f_{\alpha_k}(x) := \frac{f(x)}{(x-\alpha_k)}$, this implies:
\[
    g(x) = cf(x) + \sum_{k=1}^n c_{\alpha_k} f_{\alpha_k}(x)
\]
If $g(\alpha_k) = 0$, then $c_{\alpha_k}=0$. Otherwise we compute:
\[
    c_{\alpha_k} = \lim_{x \to \alpha_k} \frac{(x-\alpha_k)g(x)}{f(x)} = \left[\frac{f'(\alpha_k)}{g(\alpha_k)}\right]^{-1} = \left[\left(\frac{f}{g}\right)'(\alpha_k)\right]^{-1}
\]
This leads to the first result, which is a classical one.

\begin{proposition}
    Fix $f,g \in \R_n[x]$. Suppose $f$ is monic and has $n$ simple real roots, $\alpha_1,...,\alpha_n$. Consider the decomposition:
    \[
        g(x) = cf(x) + \sum_{k=1}^n c_{\alpha_k} f_{\alpha_k}(x)
    \]
    Then, $g \ll f$ iff $c_{\alpha_k} \geq 0$ for all $k$, and $f \ll g$ iff $c_{\alpha_k} \leq 0$ for all $k$.
\end{proposition}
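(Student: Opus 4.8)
The plan is to connect the sign of each coefficient $c_{\alpha_k}$ directly to the sign of the quantity $(f'g - fg')$ evaluated at the roots of $f$, and then use a degree and sign-change argument to promote this pointwise information to the global inequality $f'g - fg' \leq 0$ (or $\geq 0$) that defines the interlacing relation. First I would observe that at each simple root $\alpha_k$ of $f$, the computation already recorded in the excerpt gives
\[
    c_{\alpha_k} = \left[\left(\frac{f}{g}\right)'(\alpha_k)\right]^{-1} = \frac{g(\alpha_k)}{f'(\alpha_k)},
\]
so that the sign of $c_{\alpha_k}$ equals the sign of $g(\alpha_k)/f'(\alpha_k)$, which equals the sign of $f'(\alpha_k)g(\alpha_k) = (f'g - fg')(\alpha_k)$ since $fg'$ vanishes at $\alpha_k$. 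Thus $c_{\alpha_k} \leq 0$ for all $k$ is equivalent to $(f'g-fg')(\alpha_k) \leq 0$ for all $k$, and similarly $c_{\alpha_k} \geq 0$ corresponds to $(f'g-fg')(\alpha_k) \geq 0$.

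The forward direction is then immediate: if $f \ll g$, then by definition $(f'g - fg') \leq 0$ everywhere, in particular at each $\alpha_k$, so every $c_{\alpha_k} \leq 0$; the case $g \ll f$ is symmetric. The substance is the converse, where I must pass from knowing the sign of $(f'g-fg')$ only at the $n$ points $\alpha_1, \dots, \alpha_n$ to knowing its sign on all of $\R$. Here I would use the Wronskian-type polynomial $W := f'g - fg'$, which has degree at most $2n-2$. The key structural fact is that because $f$ has $n$ simple roots, $W$ is constrained: I would examine its behavior on the $n-1$ bounded intervals between consecutive $\alpha_k$ together with the two unbounded rays. Rather than controlling $W$ directly on all of $\R$, the cleaner route is to work through real-rootedness: the values $c_{\alpha_k}$ being all of one sign says exactly that the rational function $g/f$ has all its residues of one sign, which forces $g/f$ to be monotone between consecutive poles, hence $g$ has exactly one root in each gap $(\alpha_k, \alpha_{k+1})$, giving interlacing of the roots of $g$ with those of $f$.

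The main obstacle I anticipate is handling the sign of $W$ at the extremes and ruling out extraneous roots of $g$ outside the interlacing pattern, particularly establishing that $g$ is real-rooted in the first place rather than merely having the right number of real roots in the gaps. To address this cleanly, I would argue as follows: the partial fraction form shows $\left(\frac{g}{f}\right)'(x) = -\sum_k c_{\alpha_k}(x-\alpha_k)^{-2}$, so if all $c_{\alpha_k} \leq 0$ then $(g/f)' \geq 0$ wherever defined, meaning $g/f$ is nondecreasing on each interval of its domain. On each gap $(\alpha_k, \alpha_{k+1})$ the function $g/f$ runs monotonically from $\pm\infty$ to $\mp\infty$ (the signs determined by the residues), so it attains the value $0$ exactly once, placing one root of $g$ strictly inside each gap; counting these $n-1$ interior roots against $\deg g \leq n$ forces all roots of $g$ to be real and to interlace those of $f$, and the monotonicity sign $(g/f)' \geq 0$ is precisely $f'g - fg' \geq 0$, i.e. $g \ll f$ fails and instead $f \ll g$. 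I would then carefully reconcile this with the stated correspondence (tracking that $c_{\alpha_k} \leq 0 \iff f \ll g$), double-checking the sign convention against the definition $\left(\frac{f}{g}\right)' \leq 0 \iff f \ll g$ given at the start, and treat the symmetric case by the substitution $g \mapsto -g$ using the noted fact that $f \ll g \iff g \ll -f$.
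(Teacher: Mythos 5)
Your proof is correct, and its converse direction takes a genuinely different route from the paper's, so a comparison is worthwhile. The forward direction is identical in both: evaluate signs at the roots of $f$ via $c_{\alpha_k} = g(\alpha_k)/f'(\alpha_k)$. For the converse, the paper argues structurally: $cf \ll f$ and $f_{\alpha_k} \ll f$ for each $k$, and the set $\{h : h \ll f\}$ is closed under nonnegative combinations (the Wronskian $h'f - hf'$ is linear in $h$), so $g = cf + \sum_k c_{\alpha_k} f_{\alpha_k} \ll f$ whenever all $c_{\alpha_k} \geq 0$; this phrasing feeds directly into the cone description $\cone(f,-f,f_{\alpha_1},\ldots,f_{\alpha_n})$ in the corollary that follows. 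You instead differentiate the partial fraction expansion to get $(g/f)' = -\sum_k c_{\alpha_k}(x-\alpha_k)^{-2}$ and read the sign off directly; clearing denominators, this is the polynomial identity $f'g - fg' = \sum_k c_{\alpha_k} f_{\alpha_k}^2$, which in fact yields both directions of the proposition at once (evaluate at $\alpha_j$, where $f_{\alpha_j}(\alpha_j) \neq 0$, for the forward implication). Your route is more self-contained, needing no combination lemma; the paper's route makes the cone structure explicit, which is the point of that subsection. Two repairs to your write-up. First, a sign slip: $(g/f)' \geq 0$ is equivalent to $g'f - gf' \geq 0$, i.e.\ $f'g - fg' \leq 0$, not $\geq 0$ as you wrote; your conclusion $f \ll g$ matches the corrected sign, so the error is self-cancelling, but as stated that middle step contradicts your own conclusion. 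Second, the detour about $g/f$ running monotonically from $\pm\infty$ to $\mp\infty$ in each gap $(\alpha_k,\alpha_{k+1})$ requires the residues at both endpoints to be nonzero, so it breaks down when some $c_{\alpha_k} = 0$ (and the root count needs care when $\deg g < n$); fortunately, since the paper defines $\ll$ purely by the sign condition on $f'g - fg'$, no statement about the location or reality of the roots of $g$ is needed, and your final derivative-sign argument alone completes the proof.
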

\begin{proof}
    $(\Rightarrow)$. If $f \ll g$, then $\left(\frac{f}{g}\right)' \leq 0$. This implies $c_{\alpha_k} \leq 0$ for all $k$. If $g \ll f$, then $f \ll -g$. The same argument implies $c_{\alpha_k} \geq 0$ for all $k$.
    
    $(\Leftarrow)$. By the above argument, $cf \ll f$ and $f_{\alpha_k} \ll f$ for all $k$. So, if $c_{\alpha_k} \geq 0$ for all $k$, then:
    \[
        g = cf + \sum_{k=1}^n c_{\alpha_k} f_{\alpha_k}(x) \ll f
    \]
    A similar argument works to show $f \ll g$ if $c_{\alpha_k} \leq 0$ for all $k$.
\end{proof}

There is actually another way to state this result, in terms of cones of polynomials. Let $\cone(f_1,...,f_m)$ denote the closure of the positive cone generated by the polynomials $f_1,...,f_m$.

\begin{corollary}
    Let $f \in \R_n[x]$ be a monic polynomial with $n$ simple roots, $\alpha_1,...,\alpha_n$. Then, $\{g \in \R_n[x] : g \ll f\} = \cone(f,-f,f_{\alpha_1},...,f_{\alpha_n})$ and $\{g \in \R_n[x] : f \ll g\} = \cone(f,-f,-f_{\alpha_1},...,-f_{\alpha_n})$.
\end{corollary}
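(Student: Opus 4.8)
The plan is to show this corollary follows almost immediately from the preceding proposition, with the main work being to justify that the described sets are genuinely equal to the stated cones (both inclusions) rather than merely subsets.

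First I would prove the statement $\{g \in \R_n[x] : g \ll f\} = \cone(f,-f,f_{\alpha_1},\dots,f_{\alpha_n})$. For the inclusion showing the cone is contained in the set, I would invoke the observations from the proposition's proof: each generator satisfies $f \ll f$ (hence $cf \ll f$ for any sign of $c$, giving both $f$ and $-f$), and $f_{\alpha_k} \ll f$ for each $k$, since $f_{\alpha_k}$ is monic of degree $n-1$ and interlaces $f$. Then the combining rule ``if $f_1 \ll f$ and $f_2 \ll f$ then $a f_1 + b f_2 \ll f$ for $a,b \in \R_+$'' extends by induction to any non-negative combination of the generators, so every element of the positive cone lies in the set; taking closures is harmless because the relation $g \ll f$ is defined by the closed condition $f'g - fg' \leq 0$, hence the set $\{g : g \ll f\}$ is closed in $\R_n[x]$. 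For the reverse inclusion, I would take any $g$ with $g \ll f$ and apply the partial fraction decomposition $g = cf + \sum_k c_{\alpha_k} f_{\alpha_k}$. By the proposition, $g \ll f$ forces $c_{\alpha_k} \geq 0$ for all $k$; the coefficient $c$ may have either sign, but since both $f$ and $-f$ are available as generators, I can write $cf = c^+ f + c^- (-f)$ with $c^+, c^- \geq 0$, exhibiting $g$ as a non-negative combination of $f, -f, f_{\alpha_1}, \dots, f_{\alpha_n}$.

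The second identity $\{g : f \ll g\} = \cone(f,-f,-f_{\alpha_1},\dots,-f_{\alpha_n})$ follows by the symmetric argument, using that $f \ll g$ is equivalent (by the proposition) to all $c_{\alpha_k} \leq 0$, so $-c_{\alpha_k} \geq 0$ and each term is a non-negative multiple of $-f_{\alpha_k}$; alternatively one can deduce it from the first identity via the relation $g \ll f \iff f \ll -g$, noting that negation sends the first cone to the second.

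The step I expect to require the most care is the treatment of the closure and the degree bookkeeping. A non-negative combination $cf + \sum_k c_{\alpha_k} f_{\alpha_k}$ generically has degree exactly $n$ (leading coefficient $c + \sum_k c_{\alpha_k}$), but this leading coefficient can vanish, producing a polynomial of degree strictly less than $n$; I must confirm that such degenerate combinations still belong to $\R_n[x]$ (they do, since $\R_n[x]$ denotes polynomials of degree \emph{at most} $n$) and that the interlacing relation $\ll$ continues to hold in these boundary cases, which is exactly what the closure in the definition of $\cone$ is designed to accommodate. Apart from this, the argument is a direct translation of the proposition into the language of cones.
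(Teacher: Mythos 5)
Your proof is correct and follows essentially the same route as the paper, which simply notes that any $g \in \R_n[x]$ decomposes as a linear combination of $f, f_{\alpha_1},\dots,f_{\alpha_n}$ and cites the preceding proposition. The extra details you supply (the combining rule for $\ll$, splitting $cf$ into positive parts, and closedness of $\{g : g \ll f\}$ to handle the closure in the definition of $\cone$) are exactly the points the paper leaves implicit.
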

\begin{proof}
    Since any $g \in \R_n[x]$ can be written as a linear combination of $f,f_{\alpha_1},...,f_{\alpha_n}$, the result follows from the previous proposition.
\end{proof}

This immediately yields the following result concerning linear operators preserving certain interlacing relations.

\begin{definition}
    Given a real linear operator $T: \R_n[x] \to \R[x]$, and a real-rooted polynomial $f$, we say that $T$ \emph{preserves interlacing with respect to $f$} if $g \ll f$ implies $T[g] \ll T[f]$ and $f \ll g$ implies $T[f] \ll T[g]$ for all $g \in \R_n[x]$
\end{definition}

\begin{corollary}\label{interlacing_op_cor}
    Fix a real linear operator $T: \R_n[x] \to \R[x]$, and fix $f \in \R_n[x]$. Suppose $f$ is monic with $n$ simple roots, $\alpha_1,...,\alpha_n$, and that $T[f_{\alpha_k}] \ll T[f]$ for all $k$. Then, $T$ preserves interlacing with respect to $f$.
\end{corollary}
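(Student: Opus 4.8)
The plan is to reduce the statement to the cone characterization proved in the preceding corollary and then exploit both the linearity of $T$ and the closure of the $\ll$ relation under non-negative combinations. The whole argument never inspects the roots of the image polynomials; it works entirely at the level of the convex-cone structure.

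First I would handle the direction $g \ll f \Rightarrow T[g] \ll T[f]$. By the cone corollary, $g \ll f$ means $g \in \cone(f,-f,f_{\alpha_1},\ldots,f_{\alpha_n})$, so I can write $g = c f + \sum_{k=1}^n b_k f_{\alpha_k}$ with $c \in \R$ and each $b_k \geq 0$. Applying the linear operator $T$ gives $T[g] = c\,T[f] + \sum_{k=1}^n b_k\,T[f_{\alpha_k}]$. Now every summand interlaces $T[f]$ in the correct sense: the hypothesis supplies $T[f_{\alpha_k}] \ll T[f]$ for each $k$, and the identity $af \ll bf$ (valid for all real $a,b$) gives $c\,T[f] \ll T[f]$ regardless of the sign of $c$. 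Invoking the classical rule that $p \ll r$ and $q \ll r$ imply $ap+bq \ll r$ for $a,b \in \R_+$, and iterating over the $n+1$ terms with the non-negative weights $1,b_1,\ldots,b_n$, I conclude $T[g] \ll T[f]$.

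The reverse direction $f \ll g \Rightarrow T[f] \ll T[g]$ is symmetric, but I would route it through the reflection identity $p \ll q \iff q \ll -p$ so as to reuse the hypothesis without change. Here $f \ll g$ places $g$ in $\cone(f,-f,-f_{\alpha_1},\ldots,-f_{\alpha_n})$, so $g = c f - \sum_k b_k f_{\alpha_k}$ with $c \in \R$ and $b_k \geq 0$, whence $T[g] = c\,T[f] - \sum_k b_k\,T[f_{\alpha_k}]$. Applying the reflection identity to the hypothesis turns $T[f_{\alpha_k}] \ll T[f]$ into $T[f] \ll -T[f_{\alpha_k}]$, and $af \ll bf$ gives $T[f] \ll c\,T[f]$. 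The dual combination rule, namely $r \ll p$ and $r \ll q$ imply $r \ll ap+bq$ for $a,b \in \R_+$, then yields $T[f] \ll c\,T[f] + \sum_k b_k\bigl(-T[f_{\alpha_k}]\bigr) = T[g]$, as desired.

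The argument has no serious obstacle; the only point requiring care is the bookkeeping of signs. The cone in the corollary contains both $f$ and $-f$, so the coefficient of $f$ (and hence of $T[f]$) is an arbitrary real number rather than a non-negative one. The identity $af \ll bf$ for all real $a,b$ is precisely what absorbs this arbitrary sign, so that the combination rules, which demand non-negative weights only on the remaining $f_{\alpha_k}$ terms, still apply. I would also remark that $\ll$ is defined purely through the inequality $p'q - pq' \leq 0$ for all $p,q \in \R[x]$, so the conclusion remains valid even if some image polynomials $T[f_{\alpha_k}]$ or $T[f]$ drop in degree or vanish.
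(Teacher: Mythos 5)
Your proof is correct and follows exactly the route the paper intends: the paper states this corollary as an immediate consequence of the cone characterization, and your argument simply makes that explicit via the decomposition $g = cf + \sum_k b_k f_{\alpha_k}$, linearity of $T$, the hypothesis $T[f_{\alpha_k}] \ll T[f]$, the identity $af \ll bf$, and the closure of $\ll$ under non-negative combinations (with the reflection $p \ll q \iff q \ll -p$ handling the second direction). Your closing remark is also well taken: since $\ll$ is defined by the Wronskian inequality, every step is purely linear-algebraic and survives degree drops in the image polynomials.
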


\subsection{Lamprecht's Approach}

In what follows, we follow Lamprecht's approach to proving that the space of $q$-log mesh polynomials is preserved by the $q$-mutliplicative convolution. Here, we are only interested in proving this result for $q$-log mesh polynomials with non-negative roots, which simplifies the proof. (Lamprecht demonstrates this result for a more general class of polynomials.) The main structure of the proof is: (1) establish properties of two distinguished polar derivatives, (2) show how these derivatives relate to the $q$-multiplicative convolution, and (3) use this to prove that the $q$-multiplicative convolution preserves certain interlacing properties. In the next section, we will emulate this method for $b$-mesh polynomials and the $b$-additive convolution.



\subsubsection*{\texorpdfstring{$q$}{q}-Polar Derivatives}

In \cite{lamprecht}, Lamprecht defines the following $q$-derivative operators, which generalize the operators $\partial_x$ and $-\partial_y$ on homogeneous polynomials. Here, $q > 1$ is always assumed, as above. (As a note, Lamprecht uses the $\Delta$ symbol for these derivatives, and actually gives different definitions as his convention is $q \in (0,1)$.)
\[
    (d_{q,n} f)(x) := \frac{f(qx) - f(x)}{q^{1-n}(q^n-1)x}
    ~~~~~~~~~~~~~~~
    (d_{q,n}^* f)(x) := \frac{f(qx) - q^nf(x)}{q^n-1}
\]
He then goes on to show that these ``derivative'' operators have similar preservation properties to that of the usual derivatives. In particular, he obtains the following.

\begin{proposition}\label{q_deriv_prop}
    The operators $d_{q,n}: \R_n[x] \to \R_{n-1}[x]$ and $d_{q,n}^*: \R_n[x] \to \R_{n-1}[x]$ preserve the space of $q$-log mesh polynomials and the space of strictly $q$-log mesh polynomials. Further, we have that $d_{q,n}f \ll f$ and $d_{q,n}^*f \ll d_{q,n}f$.
\end{proposition}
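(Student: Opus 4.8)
The plan is to prove this proposition by leveraging the interlacing machinery developed earlier, particularly Corollary \ref{interlacing_op_cor}, together with the explicit action of $d_{q,n}$ and $d_{q,n}^*$ on $q$-log mesh polynomials. First I would unpack what the operators do on a concrete $q$-log mesh polynomial. If $f$ has non-negative roots $0 \le \alpha_1 \le \cdots \le \alpha_n$, then by the Remark, the $q$-log mesh condition is exactly $f \ll f(q^{-1}x)$, and I would want to see how $d_{q,n}$ and $d_{q,n}^*$ transform these roots. The key computation is that $d_{q,n}$ and $d_{q,n}^*$ are (up to affine rescaling in $x$) finite-difference analogues of $\partial_x$ and $-\partial_y$ acting on the homogenization, so I would first verify that each sends $\R_n[x] \to \R_{n-1}[x]$ and compute its effect on a product $\prod_k (x - \alpha_k)$ or, more usefully, on the factored form, to identify the image roots and check they again satisfy the mesh inequality.

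Next I would split the claim into its two parts. For the preservation of $q$-log mesh (and strict $q$-log mesh) I would argue that both operators are interlacing-preserving in the sense of the Definition, and more: they map the cone of $q$-log mesh polynomials into itself. The cleanest route is to show $d_{q,n}f \ll f$ directly, since this is both one of the asserted conclusions and, via the Remark's reformulation, a tool for establishing mesh preservation. To get $d_{q,n} f \ll f$, I would use Corollary \ref{interlacing_op_cor}: taking $T = d_{q,n}$, it suffices to check that $d_{q,n}[f_{\alpha_k}] \ll d_{q,n}[f]$ for each $k$, which reduces to a finite collection of interlacing checks on lower-degree polynomials, or alternatively to verify the sign condition $(d_{q,n}f)' f - (d_{q,n}f) f' \le 0$ at the roots of $f$ using the partial-fraction expansion from the Proposition. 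The analogous statement $d_{q,n}^* f \ll d_{q,n} f$ I would handle by relating the two operators: since both are built from the same two evaluations $f(qx)$ and $f(x)$, their difference and the quantity $(d_{q,n}^* f)' (d_{q,n} f) - (d_{q,n}^* f)(d_{q,n} f)'$ should factor in a way that reveals a definite sign, again reducible to the mesh condition on $f$.

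The step I expect to be the main obstacle is establishing the strict and non-strict mesh preservation uniformly, i.e. showing that if $f \ll f(q^{-1}x)$ then $d_{q,n}f \ll (d_{q,n}f)(q^{-1}x)$, and similarly for $d_{q,n}^*$, rather than merely showing the images are real-rooted. This is where the commutation behavior of the operators with the dilation $x \mapsto q^{-1}x$ becomes essential: I would compute how $d_{q,n}$ interacts with the substitution $f \mapsto f(q^{-1}x)$, expecting an identity of the form $d_{q,n}[f(q^{-1}x)] = c \cdot (d_{q,n}f)(q^{-1}x)$ for a positive constant $c$, which would let me transport the hypothesis $f \ll f(q^{-1}x)$ through the operator using the interlacing-preservation property. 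Establishing this intertwining relation cleanly, and verifying that the positivity of the relevant partial-fraction coefficients is preserved (with strict inequalities in the strict case), is the crux.

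To carry this out I would proceed in the order: (1) record the explicit formulas for $d_{q,n}$ and $d_{q,n}^*$ on factored polynomials and confirm the degree drop; (2) prove the intertwining identity with the dilation $x \mapsto q^{-1}x$; (3) apply Corollary \ref{interlacing_op_cor} to obtain $d_{q,n}f \ll f$ and, combined with the intertwining identity, deduce $q$-log mesh preservation; (4) derive $d_{q,n}^* f \ll d_{q,n} f$ from the relation between the two operators and conclude preservation for $d_{q,n}^*$; and (5) track all inequalities to confirm the strict case follows verbatim with $\le$ replaced by $<$.
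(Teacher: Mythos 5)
There is no internal proof to compare against here: the paper explicitly omits the proof of Proposition \ref{q_deriv_prop}, deferring to \cite{lamprecht}, where the result is assembled from several lemmas proved by direct root-location and sign-change arguments. Judged on its own terms, your proposal has a genuine logical gap at its center: Corollary \ref{interlacing_op_cor} cannot deliver the conclusion $d_{q,n}f \ll f$. That corollary, applied to $T = d_{q,n}$, concludes (once its hypothesis is verified) that $d_{q,n}$ \emph{preserves interlacing with respect to $f$}, i.e. that $g \ll f$ implies $d_{q,n}g \ll d_{q,n}f$ for other polynomials $g$; it never compares the image $T[f]$ with the original $f$. The relation $d_{q,n}f \ll f$ is a statement of a different kind, and it needs a direct argument (for instance, evaluating $f(qx)-f(x)$ at the roots of $f$ and counting sign changes), which you do not supply.

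The second gap is circularity. Your intertwining identity is correct --- indeed $d_{q,n}[f(q^{-1}x)] = q^{-1}(d_{q,n}f)(q^{-1}x)$ --- and it validly reduces $q$-log mesh preservation to showing that $d_{q,n}$ carries the interlacing $f \ll f(q^{-1}x)$ to $d_{q,n}f \ll d_{q,n}[f(q^{-1}x)]$. But to obtain that from Corollary \ref{interlacing_op_cor} you must verify its hypothesis, $d_{q,n}[f_{\alpha_k}] \ll d_{q,n}[f]$ for every root $\alpha_k$ of $f$, and you give no method for doing so; that verification requires knowing how $d_{q,n}$ moves roots, which is essentially the content of the proposition itself. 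Contrast this with Theorem \ref{thm:qinterlacing}, where the analogous hypothesis for $T_g$ is verified by induction on degree using the recursive identities \emph{together with Proposition \ref{q_deriv_prop} as an input}; for $T = d_{q,n}$ there is no such recursion available, and the proposition is precisely the missing ingredient. The objection applies with even more force to your step for $d_{q,n}^*$: the assertion that the relevant Wronskian-type expression ``should factor in a way that reveals a definite sign'' is an expectation, not an argument, and the preservation claim for the starred operator is genuinely delicate --- the paper itself points out that the additive analogue $\Delta_{b,n}^*$ fails to preserve $b$-mesh, so any proof that $d_{q,n}^*$ preserves $q$-log mesh must exploit specific features of the multiplicative setting rather than formal structure shared by both.
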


The above result is actually spread across a number of results in Lamprecht's paper. We omit the proof for now, referring the reader to \cite{lamprecht}.

\subsubsection*{Recursive Identities}
Lamprecht then determines the following identities, which are crucial to his inductive proof of the main result of this section. Fix $f \in \R_{n-1}[x]$ and $g \in \R_n[x]$.
\[
    f \boxtimes_q^n g = f \boxtimes_q^{n-1} d_{q,n}^*g
    ~~~~~~~~~~~~~~~
    (xf) \boxtimes_q^n g = x(f \boxtimes_q^{n-1} d_{q,n}g)
\]

\subsubsection*{Lamprecht's Proof}
With this, we now state an interesting result about interlacing preservation of the $q$-convolution operator. We will then derive the main result as a corollary.

\newtheorem*{qinterlacing}{Theorem \ref{thm:qinterlacing}}
\begin{qinterlacing}[Lamprecht Interlacing-Preserving]
    Let $f,g \in \R_n[x]$ be $q$-log mesh polynomials of degree $n$ with only positive roots. Let $T_g: \R_n[x] \to \R_n[x]$ be the real linear operator defined by $T_g: r \mapsto r \boxtimes_q^n g$. Then, $T_g$ preserves interlacing with respect to $f$.
\end{qinterlacing}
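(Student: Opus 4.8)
The plan is to prove this by induction on $n$, using Corollary~\ref{interlacing_op_cor} as the base engine and the recursive identities to reduce the degree-$n$ case to the degree-$(n-1)$ case. By Corollary~\ref{interlacing_op_cor}, it suffices (for fixed $f$ with simple roots $\alpha_1,\dots,\alpha_n$) to show that $T_g[f_{\alpha_k}] \ll T_g[f]$ for every $k$, i.e.\ that $(f_{\alpha_k} \boxtimes_q^n g) \ll (f \boxtimes_q^n g)$. The difficulty is that $f_{\alpha_k}$ has degree $n-1$, so $f_{\alpha_k} \boxtimes_q^n g$ and $f \boxtimes_q^n g$ live in different degrees a priori; this is exactly where the recursive identities enter. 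Writing $f = (x - \alpha_k) f_{\alpha_k} = x f_{\alpha_k} - \alpha_k f_{\alpha_k}$ and applying the identities $(x h) \boxtimes_q^n g = x(h \boxtimes_q^{n-1} d_{q,n} g)$ and $h \boxtimes_q^n g = h \boxtimes_q^{n-1} d_{q,n}^* g$ for $h = f_{\alpha_k} \in \R_{n-1}[x]$, I would expand
\[
    f \boxtimes_q^n g = x\bigl(f_{\alpha_k} \boxtimes_q^{n-1} d_{q,n} g\bigr) - \alpha_k \bigl(f_{\alpha_k} \boxtimes_q^{n-1} d_{q,n}^* g\bigr),
\]
while $f_{\alpha_k} \boxtimes_q^n g = f_{\alpha_k} \boxtimes_q^{n-1} d_{q,n}^* g$.

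The strategy is then to control each piece using the inductive hypothesis applied at degree $n-1$. By Proposition~\ref{q_deriv_prop}, both $d_{q,n} g$ and $d_{q,n}^* g$ are $q$-log mesh of degree $n-1$ with positive roots, and they satisfy the interlacing $d_{q,n}^* g \ll d_{q,n} g$. I would apply the degree-$(n-1)$ version of the theorem to the operator $r \mapsto r \boxtimes_q^{n-1} d_{q,n} g$ (and likewise for $d_{q,n}^* g$): since the inductive hypothesis gives that convolving with a fixed $q$-log mesh polynomial preserves interlacing with respect to $f_{\alpha_k}$, and more importantly that the convolution interacts well with the ambient interlacing $d_{q,n}^* g \ll d_{q,n} g$, I expect to deduce
\[
    \bigl(f_{\alpha_k} \boxtimes_q^{n-1} d_{q,n}^* g\bigr) \ll \bigl(f_{\alpha_k} \boxtimes_q^{n-1} d_{q,n} g\bigr).
\]
Setting $p := f_{\alpha_k} \boxtimes_q^{n-1} d_{q,n} g$ and $r := f_{\alpha_k} \boxtimes_q^{n-1} d_{q,n}^* g$, the goal reduces to showing $r \ll xp - \alpha_k r$, given $r \ll p$ (and that $p, r$ are real-rooted, monic up to scaling, with the degree relationship $\deg(xp) = \deg r + 1$).

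The final step is a direct interlacing manipulation. From $r \ll p$ and the elementary rules in the preliminaries ($af \ll bf$ for scalars, and the additivity of $\ll$ under positive combinations on a fixed side), I would argue that $r \ll xp$ and $r \ll -\alpha_k r$ combine to give $r \ll xp - \alpha_k r$; since $\alpha_k > 0$, the sign bookkeeping is what must be checked carefully, using $f \ll g \iff g \ll -f$ to flip sides as needed. I anticipate the \textbf{main obstacle} to be twofold: first, correctly formulating the inductive hypothesis so that it delivers not just interlacing preservation with respect to a single polynomial but the joint statement that the convolution respects the relation $d_{q,n}^* g \ll d_{q,n} g$ between the two derivative outputs (this likely requires strengthening the induction, or invoking a separate lemma that convolution by a fixed $q$-log mesh polynomial is itself interlacing-preserving in its other argument); and second, verifying that the degree and leading-coefficient bookkeeping in the identity for $f \boxtimes_q^n g$ produces polynomials in the correct degree classes so that the $\ll$ relation is meaningful and the monotonicity-of-largest-root facts from the preliminaries apply. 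Establishing the base case $n = 1$ should be routine, as all polynomials involved are linear and interlacing is automatic.
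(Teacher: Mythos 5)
Your proposal is correct and takes essentially the same route as the paper: induction anchored by Corollary \ref{interlacing_op_cor}, the two recursive identities applied to the decomposition $f = (x-\alpha_k)f_{\alpha_k}$, and the final combination $r \ll xp - \alpha_k r$ obtained from $r \ll xp$ (using positivity of the roots) together with $r \ll -\alpha_k r$. The ``main obstacle'' you flag is not actually one: since the inductive hypothesis is universally quantified over pairs of degree-$(n-1)$ $q$-log mesh polynomials with positive roots, you may apply it with the roles swapped — take the fixed polynomial to be $f_{\alpha_k}$ and the reference polynomial to be $d_{q,n}g$, so that $d_{q,n}^*g \ll d_{q,n}g$ gives $d_{q,n}^*g \boxtimes_q^{n-1} f_{\alpha_k} \ll d_{q,n}g \boxtimes_q^{n-1} f_{\alpha_k}$ — and then the symmetry of $\boxtimes_q^{n-1}$ yields exactly $f_{\alpha_k} \boxtimes_q^{n-1} d_{q,n}^*g \ll f_{\alpha_k} \boxtimes_q^{n-1} d_{q,n}g$, with no strengthening of the induction and no separate lemma needed; this is precisely what the paper means by ``the inductive hypothesis and symmetry of $\boxtimes_q^n$.''
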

\begin{proof}
    We prove the theorem by induction. For $n=1$ the result is straightforward, as $\boxtimes_q^1 \equiv \boxtimes^1$. For $m>1$, we inductively assume that the result holds for $n=m-1$. By Corollary \ref{interlacing_op_cor} and the fact that $f$ has $n$ simple roots, we only need to show that $T_g[f_{\alpha_k}] \ll T_g[f]$ for all roots $\alpha_k$ of $f$. That is, we want to show $f_{\alpha_k} \boxtimes_q^m g \ll f \boxtimes_q^m g$ for all $k$.
    
    By Proposition \ref{q_deriv_prop}, we have that $d_{q,m}g$ and $d_{q,m}^*g$ are $q$-log mesh and $d_{q,m}^*g \ll d_{q,m}g$. Further, $d_{q,m}g$ and $d_{q,m}^*g$ are of degree $m-1$ and have no roots at 0. The inductive hypothesis and symmetry of $\boxtimes_q^n$ then imply:
    \[
        f_{\alpha_k} \boxtimes_q^{m-1} d^*_{q,m}g \ll f_{\alpha_k} \boxtimes_q^{m-1} d_{q,m}g
    \]
    The fact that these polynomial have leading coefficients with the same sign means that the max root of $f_{\alpha_k} \boxtimes_q^{m-1} d_{q,m}g$ is larger than that of $f_{\alpha_k} \boxtimes_q^{m-1} d^*_{q,m}g$. Further, since all roots are positive we obtain:
    \[
        f_{\alpha_k} \boxtimes_q^{m-1} d_{q,m}^*g \ll x(f_{\alpha_k} \boxtimes_q^{m-1} d_{q,m}g)
    \]
    By properties of $\ll$, this gives:
    \[
        f_{\alpha_k} \boxtimes_q^{m-1} d_{q,m}^*g \ll x(f_{\alpha_k} \boxtimes_q^{m-1} d_{q,m}g) - \alpha_k(f_{\alpha_k} \boxtimes_q^{m-1} d_{q,m}^*g)
    \]
    By the above identities and the fact that $f(x) = (x-\alpha_k)f_{\alpha_k}(x)$, this is equivalent to $f_{\alpha_k} \boxtimes_q^m g \ll f \boxtimes_q^m g$.
\end{proof}

\newtheorem*{qlogmeshcor}{Corollary \ref{thm:qlogmesh}}
\begin{qlogmeshcor}
    Let $f,g \in \R_n[x]$ be $q$-log mesh polynomials (with non-negative roots), not necessarily of degree $n$. Then, $f \boxtimes_q^n g$ is $q$-log mesh.
\end{qlogmeshcor}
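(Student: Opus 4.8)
The plan is to deduce the corollary from Theorem~\ref{thm:qinterlacing} by exploiting the characterization from the Remark: a polynomial $h$ with non-negative roots is $q$-log mesh if and only if $h \ll h(q^{-1}x)$. Thus it suffices to produce the relation $(f \boxtimes_q^n g) \ll (f \boxtimes_q^n g)(q^{-1}x)$, together with the fact that $f \boxtimes_q^n g$ has non-negative roots; note that the relation itself already forces $f \boxtimes_q^n g$ to be real-rooted, since $h \ll h(q^{-1}x)$ certifies that $h$ and its scaling are real-rooted with interlacing roots.

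First I would reduce to the \emph{regular case} in which $f$ and $g$ both have degree exactly $n$ and have distinct, strictly positive roots. This reduction is legitimate because $\boxtimes_q^n$ is bilinear, hence continuous in the coefficients, while being $q$-log mesh with non-negative roots is a closed condition under coefficientwise limits: the root ratios $\ge q$ are weak inequalities preserved in the limit, and real-rootedness survives by Hurwitz's theorem. Any $q$-log mesh $f \in \R_n[x]$ with non-negative roots and $\deg f \le n$ can be approximated by degree-$n$, strictly-positive-root, $q$-log mesh polynomials: perturb a root at $0$ to a small $\delta > 0$, and absorb any degree deficiency by appending $n - \deg f$ roots at $R, qR, q^2R, \dots$ and letting $R \to \infty$, which drives the top leading coefficients to $0$ while preserving the mesh condition. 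Applying this to both arguments and passing to the limit reduces the claim to the regular case.

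In the regular case $f$ is, after scaling, monic with $n$ simple positive roots, so it is exactly the reference polynomial to which Theorem~\ref{thm:qinterlacing} applies, giving that $T_g$ preserves interlacing with respect to $f$. Since $f$ is $q$-log mesh, the Remark gives $f \ll f(q^{-1}x)$, and $f(q^{-1}x)$ is again a degree-$n$, positive-root, $q$-log mesh polynomial because scaling the variable by $q^{-1}$ multiplies all roots by $q$ and hence preserves consecutive root ratios. Feeding $f \ll f(q^{-1}x)$ into the interlacing-preservation property yields
\[
    f \boxtimes_q^n g = T_g[f] \ll T_g[f(q^{-1}x)] = f(q^{-1}x) \boxtimes_q^n g .
\]
The last equality is the homogeneity identity $f(q^{-1}x) \boxtimes_q^n g = (f \boxtimes_q^n g)(q^{-1}x)$, which follows from the definition of $\boxtimes_q^n$ since scaling the variable of one factor scales the roots of the convolution by the same factor (the scalar factoring out by bilinearity). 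Combining these gives $(f \boxtimes_q^n g) \ll (f \boxtimes_q^n g)(q^{-1}x)$, and since a multiplicative-type convolution of positive-root polynomials again has non-negative roots, the Remark certifies that $f \boxtimes_q^n g$ is $q$-log mesh.

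The main obstacle I anticipate is establishing the homogeneity identity cleanly from the definition of $\boxtimes_q^n$, i.e.\ confirming that variable scaling commutes with the $q$-convolution despite the $q$-weighting built into it, together with making the limiting argument fully rigorous: one must check that the degree-$n$, positive-root approximants really are $q$-log mesh and that the limit of $q$-log mesh polynomials remains $q$-log mesh even when the degree drops or roots collapse to $0$.
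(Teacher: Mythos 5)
Your proposal is correct and takes essentially the same route as the paper: the degree-$n$, positive-root case is handled by feeding $f \ll f(q^{-1}x)$ into Theorem~\ref{thm:qinterlacing} together with the identity $f(q^{-1}x) \boxtimes_q^n g = (f \boxtimes_q^n g)(q^{-1}x)$, and the general case is reduced to it by perturbing roots at $0$ to small positive values, appending large roots to fix the degree, and passing to the limit. The paper's proof does exactly this (merely presenting the regular case first and the approximation second), so there is nothing substantive to distinguish the two arguments.
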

\begin{proof}
    First suppose $f,g$ are of degree $n$ with only positive roots. Since $f \ll f(q^{-1}x)$, the previous theorem implies:
    \[
        f \boxtimes_q^n g \ll f(q^{-1}x) \boxtimes_q^n g = (f \boxtimes_q^n g)(q^{-1}x)
    \]
    That is, $f \boxtimes_q^n g$ is $q$-log mesh.
    
    Otherwise, suppose $f$ is of degree $m_f \leq n$ with $z_f$ roots at 0 and $g$ is of degree $m_g \leq n$ with $z_g$ roots at zero. Intuitively, we now add roots ``near 0 and $\infty$'' and limit. Let new polynomials $F$ and $G$ be given as follows:
    \[
        F(x) := f(x) \cdot x^{-z_f}\prod_{j=1}^{z_f} \left(x - \frac{1}{\alpha_j}\right) \cdot \prod_{j=m_f+1}^n \left(\frac{x}{\alpha_j} - 1\right)
    \]
    \[
        G(x) := g(x) \cdot x^{-z_g}\prod_{j=1}^{z_g} \left(x - \frac{1}{\beta_j}\right) \cdot \prod_{j=m_g+1}^n \left(\frac{x}{\beta_j} - 1\right)
    \]
    Here, $\alpha_j$ and $\beta_j$ are any large positive numbers such that $F$ and $G$ are $q$-log mesh polynomials of degree $n$. By the previous argument, $F \boxtimes_q^n G$ is $q$-log mesh. Letting $\alpha_j$ and $\beta_j$ limit to $\infty$ (while preserving $q$-log mesh) implies $F \boxtimes_q^n G \to f \boxtimes_q^n g$ root-wise, which implies $f \boxtimes_q^n g$ is $q$-log mesh.
\end{proof}

Lamprecht is actually able to remove the degree $n$ with positive roots restriction earlier in the line of argument, albeit at the cost of a more complicated proof. We have elected here to take the simpler route. He also proves similar results for a class of $q$-log mesh polynomials with possibly negative roots, which we omit here.

\subsection{\texorpdfstring{$b$}{b}-Additive Convolution}

The main structure of Lamprecht's argument revolves around the two ``polar'' $q$-derivatives, $d_{q,n}$ and $d_{q,n}^*$. The key properties of these derivatives are: (1) they preserve the space of $q$-log mesh polynomials, and (2) they recursively work well with the definition of the $q$-multiplicative convolution. So, when extending this argument to the $b$-additive convolution we face an immediate problem: there is only one natural derivative which preserves the space of $b$-mesh polynomials. This stems from the fact that 0 and $\infty$ have special roles in the $q$-multiplicative world, whereas only $\infty$ is special in the $b$-additive world. The key idea we introduce then is that given a fixed $b$-mesh polynomial $f$, we can pick a polar derivative with pole ``close enough to $\infty$'' so that it maps $f$ to a $b$-mesh polynomial. The fact that we use a different polar derivative for each fixed input $f$ does not affect the proof method.

We now give a few facts about the finite difference operator $\Delta_b$, which plays a crucial role in the definition of the $b$-additive convolution. Recall its definition:
\[
    (\Delta_{b,n} f)(x) \equiv (\Delta_b f)(x) := \frac{f(x) - f(x-b)}{b}
\]
(We use the notation $\Delta_{b,n}$ when we want to restrict the domain to $\R_n[x]$, as in Proposition \ref{b_deriv_prop} below.) This operator acts on rising factorial polynomials as the usual derivative acts on monomials. That is, for all $k$:
\[
    \Delta_b x(x+b)\cdots(x+(k-1)b) = kx(x+b)\cdots(x+(k-2)b)
\]
This operator has preservation properties similar to that of the usual derivative and the $q$-derivatives. The following result, along with many others regarding mesh and log-mesh polynomials, can be found in \cite{fisk}.

\begin{proposition}\label{b_deriv_prop}
    The operator $\Delta_{b,n}: \R_n[x] \to \R_{n-1}[x]$ preserves the space of $b$-mesh polynomials, and the space of strictly $b$-mesh polynomials. Further, we have $\Delta_b f \ll f$ and $\Delta_b f \ll f(x-b)$. If $f$ is strictly $b$-mesh, then these interlacings are strict.
\end{proposition}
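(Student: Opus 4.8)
The plan is to deduce everything from the two ``further'' interlacings $\Delta_b f \ll f$ and $\Delta_b f \ll f(x-b)$, which I would prove first, and then read off the mesh condition for $\Delta_b f$ directly from the locations of its roots. Throughout I would write $b\,\Delta_b f = f(x) - f(x-b)$ and recall that $f$ being $b$-mesh means precisely $f \ll f(x-b)$; note that since $b > 0$ this already forces the roots of $f$ to be simple, with consecutive gaps at least $b$ (a double root would violate $f \ll f(x-b)$ on the gap it creates). That $\Delta_{b,n}$ lands in $\R_{n-1}[x]$ is immediate from the leading-coefficient computation $f(x) - f(x-b) = n\,(\operatorname{lead} f)\,b\,x^{n-1} + \cdots$, so $\deg \Delta_b f = n-1$ whenever $\deg f = n$, and real-rootedness of $\Delta_b f$ will follow for free from $\Delta_b f \ll f$.

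First I would establish $\Delta_b f \ll f$. Since $af \ll bf$ for all reals we have $f \ll f$, while the rule ``$p \ll q$ iff $q \ll -p$'' turns the mesh hypothesis $f \ll f(x-b)$ into $-f(x-b) \ll f$. Applying the combining rule for a common right-hand side ($g \ll f$ and $h \ll f$ imply $ag + bh \ll f$ for $a,b \ge 0$) with $g = f$, $h = -f(x-b)$ gives $f(x) - f(x-b) \ll f$, and scaling by $1/b > 0$ yields $\Delta_b f \ll f$. The relation $\Delta_b f \ll f(x-b)$ follows symmetrically: here $-f(x-b) \ll f(x-b)$ is an instance of $af \ll bf$ and $f \ll f(x-b)$ is the hypothesis, so combining them gives $f(x) - f(x-b) \ll f(x-b)$. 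When $f$ is strictly $b$-mesh the hypothesis interlacing is strict, and since the combining rule preserves strictness as soon as one of the combined relations is strict, both resulting interlacings are strict as well.

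Finally I would convert these two interlacings into the mesh condition for $h := \Delta_b f$. From $\Delta_b f \ll f$ the roots $\nu_1 \le \cdots \le \nu_{n-1}$ of $h$ interlace the simple roots $\lambda_1 < \cdots < \lambda_n$ of $f$, so $\nu_i \in [\lambda_i, \lambda_{i+1}]$; from $\Delta_b f \ll f(x-b)$, whose roots are the $\lambda_i + b$, the same $\nu_i$ satisfy $\nu_i \in [\lambda_i + b, \lambda_{i+1} + b]$. Since $b > 0$ the intersection of these two intervals is $[\lambda_i + b,\ \lambda_{i+1}]$, which is the crux of the argument. It yields $\nu_{i+1} \ge \lambda_{i+1} + b$ and $\nu_i \le \lambda_{i+1}$, hence $\nu_{i+1} - \nu_i \ge (\lambda_{i+1}+b) - \lambda_{i+1} = b$, so consecutive roots of $\Delta_b f$ are at least $b$ apart; by the stated characterization this is exactly $\Delta_b f \ll (\Delta_b f)(x-b)$, i.e. $\Delta_b f$ is $b$-mesh. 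In the strictly $b$-mesh case the two interlacings are strict, giving $\nu_i < \lambda_{i+1}$ and $\nu_{i+1} > \lambda_{i+1} + b$ and thus the strict gap $\nu_{i+1} - \nu_i > b$. I expect the only delicate point to be the simultaneous localization of each $\nu_i$ by the two interlacings; once that intersection of intervals is in place the mesh inequality is forced, and the hypothesis $b > 0$ is precisely what makes the two interval constraints both compatible and nontrivial.
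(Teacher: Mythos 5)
Your proposal is correct, but there is no in-paper argument to compare it against: the paper never proves Proposition \ref{b_deriv_prop}, deferring instead to Fisk \cite{fisk}. Your argument is thus a self-contained proof built entirely from the interlacing toolkit of the paper's first subsection, which is a genuine addition. In fact the first half is even cleaner than you present it: writing $W[g,h] := g'h - gh'$, bilinearity and antisymmetry give
\[
    W[\Delta_b f,\, f] \;=\; \tfrac{1}{b}\,W[f - f(x-b),\, f] \;=\; \tfrac{1}{b}\,W[f,\, f(x-b)]
    \qquad\text{and}\qquad
    W[\Delta_b f,\, f(x-b)] \;=\; \tfrac{1}{b}\,W[f,\, f(x-b)],
\]
so both Wronskians you need are \emph{equal} to $\tfrac{1}{b}W[f, f(x-b)]$, and the hypothesis $f \ll f(x-b)$ (strictly) hands you both interlacings (strictly) at once; this also makes your claim about strictness surviving the combination rule transparent. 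The second half is the real content and it is right: intersecting the two root localizations to get $\nu_i \in [\lambda_i + b,\, \lambda_{i+1}]$ and then chaining $\nu_{i+1} \ge \lambda_{i+1} + b \ge \nu_i + b$ is exactly the kind of direct argument the paper's framework invites, and it handles the strict case by the same chain with strict inequalities.

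One step deserves more care than your ``for free'' remark: the Wronskian inequality $\Delta_b f \ll f$ does not by itself force $\Delta_b f$ to be real-rooted (the inequality is vacuously satisfied, with $W \equiv 0$, by any pair $g = f$, real-rooted or not), so the passage from the sign condition to the interval localization needs the standard sign-alternation argument: at the simple roots $\lambda_i$ of $f$, the inequality $W[\Delta_b f, f](\lambda_i) \le 0$ reads $(\Delta_b f)(\lambda_i)\, f'(\lambda_i) \ge 0$, and since $f'(\lambda_i)$ alternates in sign while $\deg \Delta_b f = \deg f - 1$, the polynomial $\Delta_b f$ acquires one root in each $[\lambda_i, \lambda_{i+1}]$. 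This argument is available to you precisely because of your (correct) observation that $b$-mesh with $b > 0$ forces the roots of $f$ to be simple; since the paper states the equivalence between $\ll$ and real-rooted interlacing as a known fact, invoking it as you do is defensible, but you should be aware of what it hides. (A final pedantic note: for $\deg f = 0$ the strictness assertion degenerates, as $\Delta_b f \equiv 0$; the proposition implicitly assumes $\deg f \ge 1$.)
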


\subsubsection*{Finding Another Polar Derivative}

We now define another ``derivative-like'' operator that is meant to generalize $\partial_y$ and $d_{q,n}^*$. Notice that unlike $\Delta_b$, this operation depends on $n$.
\[
    (\Delta^*_{b,n} f)(x) := nf(x-b) - (x-b)\Delta_b f(x)
\]
Unfortunately, this operator does \emph{not} preserve $b$-mesh. However, it does generalize other important properties of $\partial_y$. In particular, it maps $\R_n[x]$ to $\R_{n-1}[x]$, and as $b \to 0$ it limits to $\partial_y f$, the polar derivative of $f$ with respect to 0. Further, we have the following results.

\begin{lemma}\label{b_dy_basis_lemma}
    Fix $f \in \R_n[x]$ and write $f = \sum_{k=0}^n a_k x(x+b)\cdots(x+(k-1)b)$. Then:
    \[
        (\Delta_{b,n}^* f)(x+b) = \sum_{k=0}^{n-1} (n-k)a_k x(x+b)\cdots(x+(k-1)b)
    \]
\end{lemma}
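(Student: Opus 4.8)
The plan is to reduce everything to the rising-factorial basis and exploit linearity. Write $p_k(x) := x(x+b)\cdots(x+(k-1)b)$ for $0 \le k \le n$, with $p_0 \equiv 1$, so that the hypothesis reads $f = \sum_{k=0}^n a_k p_k$. Since $\Delta_{b,n}^*$ is a linear operator and both sides of the claimed identity depend linearly on $f$, it suffices to verify the formula for a single basis polynomial $f = p_k$ and then sum against the coefficients $a_k$. The essential input is the fact, already recorded above, that $\Delta_b$ acts on rising factorials exactly as the ordinary derivative acts on monomials, namely $\Delta_b p_k = k\, p_{k-1}$.

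With this, I would compute $\Delta_{b,n}^* p_k$ directly from its definition and then perform the argument shift. Unwinding the definition gives
\[
    (\Delta_{b,n}^* p_k)(x) = n\, p_k(x-b) - (x-b)\,(\Delta_b p_k)(x) = n\, p_k(x-b) - k\,(x-b)\, p_{k-1}(x),
\]
and substituting $x \mapsto x+b$ replaces $x-b$ by $x$ throughout, yielding
\[
    (\Delta_{b,n}^* p_k)(x+b) = n\, p_k(x) - k\, x\, p_{k-1}(x+b).
\]
The one genuine observation required is the shift identity $x\, p_{k-1}(x+b) = p_k(x)$ for $k \ge 1$: since $p_{k-1}(x+b) = (x+b)(x+2b)\cdots(x+(k-1)b)$, prepending the leading factor $x$ reconstitutes precisely $p_k(x)$. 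Plugging this in collapses the two terms into $(\Delta_{b,n}^* p_k)(x+b) = (n-k)\, p_k(x)$, and the case $k=0$ is immediate since then $\Delta_b p_0 = 0$ and the formula reads $n = (n-0)p_0$.

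Summing over $k$ with coefficients $a_k$ then gives $(\Delta_{b,n}^* f)(x+b) = \sum_{k=0}^n (n-k)\, a_k\, p_k(x)$, and the $k=n$ term drops out because its coefficient $n-k$ vanishes, leaving exactly the stated sum up to $n-1$. The only subtlety worth flagging — and the reason the shift by $b$ is not cosmetic — is that the constant $n$ inside $\Delta_{b,n}^*$ is the \emph{ambient} degree of the space $\R_n[x]$, not the degree of the individual basis element $p_k$; the factor $(n-k)$ is manufactured precisely by the mismatch between this fixed $n$ (coming from the $n f(x-b)$ term) and the $k$ produced by $\Delta_b p_k = k\, p_{k-1}$. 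The argument shift is what allows $n\, p_k(x-b)$ and $k(x-b)\, p_{k-1}(x)$, after the substitution, to align as scalar multiples of the same polynomial $p_k(x)$ so they can be combined cleanly. I do not anticipate any serious obstacle: once the shift identity $x\, p_{k-1}(x+b) = p_k(x)$ is spotted, the remainder is a routine linearity argument.
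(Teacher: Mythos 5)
Your proof is correct; the paper in fact states this lemma without giving a proof, and your argument — reducing by linearity to the basis polynomials $p_k(x) = x(x+b)\cdots(x+(k-1)b)$, applying the recorded fact $\Delta_b p_k = k\,p_{k-1}$, and using the shift identity $x\,p_{k-1}(x+b) = p_k(x)$ to collapse the two terms into $(n-k)p_k(x)$ — is precisely the routine verification the paper leaves implicit. No gaps: the $k=0$ case and the vanishing of the $k=n$ term are both handled correctly.
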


This next lemma is a generalization of the corollary following it.

\begin{lemma}
    Fix monic polynomials $f,g \in \R[x]$ of degree $m$ and $m-1$, respectively, such that $g$ is strictly $b$-mesh and $g \ll f$ strictly. Denote $h_{a,t}(x) := af(x) - (x-t)g(x)$ for $a \geq 1$ and $t > 0$. For all $t$ large enough, we have $g \ll h_{a,t}$ strictly, $h_{a,t} \ll f$ strictly, and $h_{a,t}$ is strictly $b$-mesh.
\end{lemma}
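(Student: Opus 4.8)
The plan is to reduce all three conclusions to sign conditions on Wronskians $W(u,v) := u'v - uv'$, using that (in the strict version) $u \ll v$ is equivalent to $W(u,v) < 0$ on all of $\R$, and that $u \ll v$ with $u$ real-rooted forces $v$ real-rooted by the real-rooted/interlacing equivalence. First I would record the structure coming from $g \ll f$ strictly: since $\deg g = \deg f - 1$ and both are monic, $f$ has $m$ simple real roots $\beta_1 < \cdots < \beta_m$, and
\[
    \frac{g(x)}{f(x)} = \sum_{i=1}^m \frac{c_i}{x-\beta_i}, \qquad c_i = \frac{g(\beta_i)}{f'(\beta_i)} > 0, \qquad \sum_{i=1}^m c_i = 1,
\]
the positivity of the $c_i$ coming from strict interlacing (Proposition on partial fractions) and $\sum_i c_i = 1$ from comparing leading behavior of the two monic polynomials.

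For $g \ll h_{a,t}$ I would compute the Wronskian directly; the $t$-dependent terms cancel against each other, leaving $W(g,h_{a,t}) = a\,W(g,f) + g^2$, which is independent of $t$. Dividing by $f^2$ and writing $w_i := (x-\beta_i)^{-1}$, this becomes $a\,(g/f)' + (g/f)^2 = -a\sum_i c_i w_i^2 + \bigl(\sum_i c_i w_i\bigr)^2$. The weighted Cauchy–Schwarz inequality with weights $c_i$ (which sum to $1$) gives $\bigl(\sum_i c_i w_i\bigr)^2 \le \sum_i c_i w_i^2$, so the whole expression is at most $(1-a)\sum_i c_i w_i^2 \le 0$ for $a \ge 1$, and strictly negative because the $\beta_i$ are distinct; a direct evaluation at each $\beta_j$ (where the sign is $c_j f'(\beta_j)^2(c_j - a) < 0$, using $c_j < 1 \le a$) covers the roots of $f$. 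Hence $W(g,h_{a,t}) < 0$ on $\R$, so $g \ll h_{a,t}$ strictly for \emph{every} $t$, and in particular $h_{a,t}$ is real-rooted.

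For $h_{a,t} \ll f$ I would again compute the Wronskian, obtaining $W(h_{a,t},f) = -gf - (x-t)\,W(g,f)$, so that
\[
    -\frac{W(h_{a,t},f)}{f^2} = \frac{g}{f} + (x-t)\Bigl(\frac{g}{f}\Bigr)' = \sum_{i=1}^m \frac{c_i\,(t-\beta_i)}{(x-\beta_i)^2}.
\]
As soon as $t > \lambda_f = \beta_m$ every summand is positive, so $W(h_{a,t},f) < 0$ off the roots of $f$, while at each $\beta_j$ the sign is $(\beta_j - t)\,c_j f'(\beta_j)^2 < 0$. Thus $h_{a,t} \ll f$ strictly for all $t > \lambda_f$.

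The remaining and most delicate claim is that $h_{a,t}$ is strictly $b$-mesh, and here I would argue by continuity rather than a Wronskian identity, since the mesh condition $h \ll h(\cdot-b)$ is nonlinear in $h$. Writing $h_{a,t} = t\,g + p_a$ with $p_a := af - xg$ fixed, on any compact set $h_{a,t}/t \to g$ as $t \to \infty$; since $g$ has simple roots, Hurwitz's theorem yields $m-1$ roots of $h_{a,t}$ converging to the roots of $g$, while (when $a>1$) the one extra root is real — real-rootedness being already in hand — and escapes to infinity. Because $g$ is \emph{strictly} $b$-mesh its consecutive gaps exceed $b$, so by continuity the corresponding gaps of $h_{a,t}$ also exceed $b$ for large $t$, and the escaping root is eventually more than $b$ from the cluster; hence $h_{a,t}$ is strictly $b$-mesh for all large $t$. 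I expect this to be the main obstacle: it is exactly where strictness of $g$'s mesh is indispensable (it supplies the margin absorbing the $O(1/t)$ perturbation of the roots) and where one must control the escaping root and confirm that no gap degenerates. Combining the three parts, all conclusions hold once $t$ exceeds $\max(\lambda_f, T_0)$ for a suitable threshold $T_0$.
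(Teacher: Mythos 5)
Your proof is correct in substance, and for the two interlacing claims it takes a genuinely different route from the paper. The paper never forms a Wronskian: it evaluates $h_{a,t}$ at the roots $\alpha_k$ of $g$ (where $h_{a,t}=af$) and at the roots $\beta_k$ of $f$ (where $h_{a,t}=-(x-t)g$), reads off an alternating sign pattern once $t$ is large, and uses a degree count to place exactly one root of $h_{a,t}$ in each interval $(\alpha_k,\beta_{k+1})$, none in $(\beta_k,\alpha_k)$, and the possible $m$-th root in $(-\infty,\beta_1)$; this single count delivers real-rootedness, both strict interlacings, and the location of the extra root all at once. Your computation buys a sharper quantitative statement: since $W(g,h_{a,t})=aW(g,f)+g^2$ is independent of $t$, the relation $g\ll h_{a,t}$ holds strictly for \emph{every} $t$, and the identity $-W(h_{a,t},f)/f^2=\sum_i c_i(t-\beta_i)/(x-\beta_i)^2$ isolates the explicit threshold $t>\lambda_f$ for $h_{a,t}\ll f$; only the mesh claim truly needs an unspecified ``large enough'' $t$. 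The weighted Cauchy--Schwarz step is valid ($c_i>0$ by strict interlacing, $\sum_i c_i=1$ by monicity), with strictness holding off the roots of $f$ because the values $1/(x-\beta_i)$ are pairwise distinct when $m\geq 2$. Your treatment of the mesh property coincides with the paper's: both normalize, let the $m-1$ finite roots converge to the roots of $g$, use strictness of $g$'s mesh as the margin absorbing the perturbation, and discard the escaping root (the paper additionally pins it down as escaping to $-\infty$, which you do not need).

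One step deserves to be shored up: the inference from ``$W(g,h_{a,t})<0$ on $\R$ and $g$ real-rooted'' to ``$h_{a,t}$ real-rooted and strictly interlacing $g$.'' The equivalence you invoke (stated loosely in the paper as well) is not literally true without side conditions: for instance $W(1,x^3+x)=-(3x^2+1)<0$ everywhere, yet $x^3+x$ is not real-rooted. What the strict Wronskian negativity actually gives is sign alternation of $h_{a,t}$ at the $m-1$ simple roots of $g$, hence $m-2$ real roots in the gaps; when $\deg h_{a,t}=m$ this leaves \emph{two} unaccounted roots, and parity alone does not exclude a complex pair. You can close this either by noting that the leading coefficient of $h_{a,t}$ is positive ($a-1$ if $a>1$, and $t+O(1)$ if $a=1$), which forces one more root in $(\alpha_{m-1},\infty)$ and one in $(-\infty,\alpha_1)$, or by combining with your part-two alternation at the $m$ roots of $f$ (valid once $t>\lambda_f$), which forces $m-1$ real roots and lets parity finish --- this is exactly the counting the paper performs. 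Separately, in the degenerate case $m=1$, $a=1$, your Cauchy--Schwarz inequality becomes an equality and $W(g,h_{1,t})\equiv 0$, so strictness fails; this case is equally degenerate for the paper's own statement and harmless in the application (the lemma is only needed for $m\geq 2$), but your parenthetical ``strictly negative because the $\beta_i$ are distinct'' silently assumes $m\geq 2$.
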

\begin{proof}
    Denote $h_{a,t}(x) := af(x) - (x-t)g(x)$. Since $f,g$ are monic, we have that $h_{a,t}$ is of degree at most $m$ with positive leading coefficient (for large $t$ if $a=1$). Further, if $\alpha_1 < \cdots < \alpha_{m-1}$ are the roots of $g$ and $\beta_1 < \cdots < \beta_m$ are the roots of $f$, then $g \ll f$ strictly and $t$ large implies:
    \[
        \begin{array}{cc}
            h_{a,t}(\alpha_{m-1}) = af(\alpha_{m-1}) < 0
            ~~~~~&~~~~~
            h_{a,t}(\beta_m) = -(\beta_m-t)g(\beta_m) > 0 \\
            h_{a,t}(\alpha_{m-2}) = af(\alpha_{m-2}) > 0
            ~~~~~&~~~~~
            h_{a,t}(\beta_{m-1}) = -(\beta_{m-1}-t)g(\beta_{m-1}) < 0 \\
            h_{a,t}(\alpha_{m-3}) = af(\alpha_{m-3}) < 0
            ~~~~~&~~~~~
            h_{a,t}(\beta_{m-2}) = -(\beta_{m-2}-t)g(\beta_{m-2}) > 0 \\
            \vdots ~~~~~&~~~~~ \vdots \\
        \end{array}
    \]
    The alternating signs imply $h_{a,t}$ has an odd number of roots in the interval $(\alpha_k, \beta_{k+1})$ and an even number of roots in the interval $(\beta_k,\alpha_k)$ for all $1 \leq k \leq m-1$. Since the degree of $h_{a,t}$ is at most $m$, each of these intervals must contain exactly one root and zero roots, respectively. If $h_{a,t}$ is of degree $m$, then it has one more root which must be real since $h_{a,t} \in \R[x]$. Additionally, since $h_{a,t}$ has positive leading coefficient, this last root must lie in the interval $(-\infty,\beta_1)$ (and not in $(\beta_m,\infty)$). Therefore, $g \ll h_{a,t}$ strictly and $h_{a,t} \ll f$ strictly.
    
    Finally, $h_{a,t} \to g$ as $t \to \infty$ coefficient-wise, and so therefore also in terms of the zeros. This means that the root in the interval $(\alpha_k,\beta_{k+1})$ will limit to $\alpha_k$ from above (for all $k$). Further, the possible root in the interval $(-\infty,\beta_1)$ will then limit to $-\infty$, as $g$ is of degree $m-1$. Since $g$ is strictly $b$-mesh, this implies $h_{a,t}$ is also strictly $b$-mesh for large enough $t$.
\end{proof}

\begin{corollary}\label{b_pol_deriv_cor}
    Let $f \in \R_n[x]$ be strictly $b$-mesh. Then for all $t > 0$ large enough, we have that $(t\Delta_{b,n} + \Delta^*_{b,n})f$ is strictly $b$-mesh and $\Delta_{b,n} f \ll (t\Delta_{b,n} + \Delta^*_{b,n})f$ strictly.
\end{corollary}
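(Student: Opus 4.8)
The plan is to apply the preceding lemma with the right identifications. The key observation is that $(t\Delta_{b,n} + \Delta^*_{b,n})f$ is exactly of the form $h_{a,t}$ from the lemma, once we identify $f$ in the lemma with $\Delta_b f$ (which is strictly $b$-mesh by Proposition \ref{b_deriv_prop}), and $g$ in the lemma with... wait, let me reconsider the degrees. Since $\Delta_{b,n}f$ has degree $n-1$ and $\Delta^*_{b,n}f$ has degree $n-1$, the operator $(t\Delta_{b,n}+\Delta^*_{b,n})f$ will have degree $n-1$, and I expect the natural move is to write $\Delta^*_{b,n}f$ in terms of $\Delta_b f$ and some lower-degree interlacing companion. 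So first I would expand $\Delta^*_{b,n}f(x) = nf(x-b) - (x-b)\Delta_b f(x)$ and combine with $t\Delta_{b,n}f$ to recognize the combination as $a\cdot(\text{monic deg } n-1) - (x-t')\cdot(\text{monic deg } n-2)$ for appropriate constants.

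Concretely, I would set $g := \Delta_b(\Delta_b f)$, the second finite difference, which has degree $n-2$ and is strictly $b$-mesh, and relate the combination $(t\Delta_{b,n}+\Delta^*_{b,n})f$ to a multiple of $\Delta_b f$ minus $(x-s)$ times $\Delta_b^2 f$, using the Leibniz-type identity for $\Delta_b$ on the product $(x-b)\Delta_b f$. The reindexing in Lemma \ref{b_dy_basis_lemma}, which describes $\Delta^*_{b,n}f$ in the rising-factorial basis, makes this explicit: writing $\Delta_b f = \sum_{k=0}^{n-1}(k{+}1)a_{k+1}\,x(x{+}b)\cdots(x{+}(k{-}1)b)$ and comparing coefficients, the combination $t\Delta_{b,n}f + \Delta^*_{b,n}f$ should simplify to a rising-factorial expansion whose leading two terms let me read off both the degree-$(n-1)$ monic part and the correction. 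After normalizing to make both $f$ and $g$ in the lemma monic, the large parameter $t$ in the corollary plays the role of the large $t$ in the lemma.

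Having matched the hypotheses — $f$ monic of degree $n-1$ (namely a scalar multiple of $\Delta_b f$), $g$ monic of degree $n-2$ (a scalar multiple of $\Delta_b^2 f$), $g$ strictly $b$-mesh, and $g \ll f$ strictly — the lemma directly yields that $h_{a,t}$ is strictly $b$-mesh and $g \ll h_{a,t}$ strictly for all large $t$. I then translate these back: strictly $b$-mesh for $h_{a,t}$ gives strictly $b$-mesh for $(t\Delta_{b,n}+\Delta^*_{b,n})f$, and $g \ll h_{a,t}$ together with the scalar identifications gives $\Delta_{b,n}f \ll (t\Delta_{b,n}+\Delta^*_{b,n})f$ strictly, which is exactly the claimed conclusion. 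The verification that $\Delta_b^2 f \ll \Delta_b f$ strictly is immediate from applying Proposition \ref{b_deriv_prop} to the strictly $b$-mesh polynomial $\Delta_b f$.

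The main obstacle I anticipate is bookkeeping rather than conceptual: correctly expanding $\Delta^*_{b,n}f = nf(x-b)-(x-b)\Delta_b f$ and showing that adding $t\Delta_{b,n}f$ produces precisely the shape $a\cdot\Delta_b f - (x-t)\cdot(\text{const}\cdot\Delta_b^2 f)$ up to normalization, so that the ``$-(x-t)g$'' structure of the lemma appears with the lemma's $t$ matching the corollary's $t$. I would carry out this identification carefully using Lemma \ref{b_dy_basis_lemma} to avoid sign or index errors in the finite-difference manipulations; once the algebraic identity is pinned down, the interlacing and mesh conclusions follow mechanically from the lemma.
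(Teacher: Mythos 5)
Your reduction has a genuine gap: the algebraic identity it relies on does not exist. You want to write $(t\Delta_{b,n}+\Delta^*_{b,n})f$ in the form $a\,\Delta_b f - (x-t')\,c'\,\Delta_b^2 f$, so that the lemma applies with its $f$ a multiple of $\Delta_b f$ and its $g$ a multiple of $\Delta_b^2 f$. But $\Delta^*_{b,n}$ sees the constant term of $f$ while $\Delta_b$ and $\Delta_b^2$ do not: from the definition, $\Delta^*_{b,n}(f+C) = n\bigl(f(x-b)+C\bigr)-(x-b)\Delta_b f = \Delta^*_{b,n}f + nC$, whereas $\Delta_b(f+C)=\Delta_b f$. (Equivalently, in the expansion of Lemma \ref{b_dy_basis_lemma} the coefficient $a_0$ enters $\Delta^*_{b,n}f$ with weight $n$, but $a_0$ is annihilated by $\Delta_b$.) Hence no identity expressing $t\Delta_{b,n}f+\Delta^*_{b,n}f$ through $\Delta_b f$ and $\Delta_b^2 f$ alone can hold; the degree-$n$ term $nf(x-b)$ cannot be eliminated by any such bookkeeping. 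There is also a second, independent problem: even granting your identification, the lemma's conclusions point the wrong way. With lemma-$f=\Delta_b f$ and lemma-$g=\Delta_b^2 f$, the lemma yields $\Delta_b^2 f \ll h_{a,t}$ and $h_{a,t} \ll \Delta_b f$; the latter is the \emph{reverse} of the required $\Delta_{b,n}f \ll (t\Delta_{b,n}+\Delta^*_{b,n})f$, and your step ``$g \ll h_{a,t}$ gives $\Delta_{b,n}f \ll \cdots$'' silently conflates $g=\Delta_b^2 f$ with $\Delta_b f$.

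The fix is to not push the degrees down at all: the expression is already in the lemma's shape one level up. Directly,
\[
(t\Delta_{b,n}+\Delta^*_{b,n})f = nf(x-b) - (x-b-t)\,\Delta_b f,
\]
which, after dividing by the leading coefficient $c$ of $\Delta_b f$ (so $1 \leq c \leq n$ for monic $f$), is exactly $h_{a,t}$ with the lemma's $f$ taken to be $f(x-b)$, the lemma's $g$ taken to be $\Delta_b f/c$, $a = n/c \geq 1$, and the lemma's pole at $t+b$. The hypotheses hold by Proposition \ref{b_deriv_prop}: $\Delta_b f$ is strictly $b$-mesh, of degree one less than $f(x-b)$, and $\Delta_b f \ll f(x-b)$ strictly. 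The conclusion $g \ll h_{a,t}$ strictly then reads precisely $\Delta_{b,n}f \ll (t\Delta_{b,n}+\Delta^*_{b,n})f$ strictly, and the strict $b$-mesh conclusion for $h_{a,t}$ gives the other claim. This is the paper's proof; note that it is the original polynomial (shifted), not a further finite difference, that must serve as the top polynomial in the lemma.
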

\begin{proof}
    Consider $(t\Delta_{b,n} + \Delta^*_{b,n})f = nf(x-b) - (x-b-t)\Delta_{b,n}f$. Note that $\Delta_{b,n}f \in \R_{n-1}[x]$ is strictly $b$-mesh and of degree one less than $f$, and $\Delta_{b,n}f \ll f(x-b)$ strictly by Proposition \ref{b_deriv_prop}. Now assume WLOG that $f$ is monic and of degree at least 1. Letting $c$ denote the leading coefficient of $\Delta_{b,n}f$, we have $1 \leq c \leq n$. We can then write:
    \[
        \frac{1}{c}(t\Delta_{b,n} + \Delta^*_{b,n})f = \frac{n}{c}f(x-b) - (x-b-t)\frac{\Delta_{b,n}f}{c}
    \]
    Applying the previous lemma to $f(x-b)$ and $\frac{\Delta_{b,n}f}{c}$ with $a = \frac{n}{c}$ gives the result.
\end{proof}

This corollary says that $t\Delta_{b,n} + \Delta^*_{b,n}$ preserves $b$-mesh, even though $\Delta_{b,n}^*$ does not. The operator $t\Delta_{b,n} + \Delta^*_{b,n}$ can be thought of as the polar derivative with respect to $t$, since by limiting $b \to 0$ we obtain the classical polar derivative.

\subsubsection*{Recursive Identities}

The $\Delta_{b,n}^*$ operator is also required to obtain $b$-additive convolution identities similar to Lamprecht's given above.

\begin{lemma}
    Fix $f \in \R_{n-1}[x]$ and $g \in \R_n[x]$. We have:
    \[
        f \boxplus_b^n g = f \boxplus_b^{n-1} \Delta_{b,n}g
        ~~~~~~~~~~~~~~~
        (xf) \boxplus_b^n g = x(f \boxplus_b^{n-1} \Delta_{b,n}g) + f \boxplus_b^{n-1} \Delta^*_{b,n} g
    \]
\end{lemma}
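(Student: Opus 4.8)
The plan is to prove both identities by expanding everything in the rising-factorial basis, where the operators $\Delta_{b,n}$ and $\Delta^*_{b,n}$ have clean descriptions. Let me write $f = \sum_{j} a_j\, x(x+b)\cdots(x+(j-1)b)$ and $g = \sum_k b_k\, x(x+b)\cdots(x+(k-1)b)$, and recall that the $b$-additive convolution $\boxplus_b^n$ is (presumably, from the paper's setup) the bilinear operation that sends the pair of basis elements of degrees $j,k$ to a normalized rising factorial of degree $j+k-n$, weighted by the usual falling-factorial coefficients $\binom{n}{j}^{-1}\binom{n}{k}^{-1}$-type normalization. The strategy is therefore: first pin down the exact combinatorial action of $\boxplus_b^n$ on the rising-factorial basis; then use Lemma~\ref{b_dy_basis_lemma} (which gives $\Delta^*_{b,n}f$ in this basis, shifted by $b$) together with the analogous elementary identity for $\Delta_{b,n}$, namely $\Delta_b$ lowers a rising factorial of degree $k$ to $k$ times the rising factorial of degree $k-1$. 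With these three actions in hand, both claimed identities reduce to verifying an equality of coefficients indexed by $(j,k)$.

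Concretely, I would carry out the verification as follows. For the first identity $f \boxplus_b^n g = f \boxplus_b^{n-1}\Delta_{b,n}g$, I would fix a basis element $x(x+b)\cdots(x+(j-1)b)$ of $f$ with $\deg = j \le n-1$ and a basis element of $g$ with $\deg = k$, and compute both sides' coefficient on the resulting degree-$(j+k-n)$ rising factorial. On the left the factor is the degree-$n$ normalization constant; on the right $\Delta_{b,n}$ first sends the degree-$k$ basis element to $k$ times the degree-$(k-1)$ one, and then $\boxplus_b^{n-1}$ applies the degree-$(n-1)$ normalization. The identity then amounts to a ratio of binomial coefficients collapsing correctly, exactly mirroring Lamprecht's multiplicative identity $f\boxtimes_q^n g = f \boxtimes_q^{n-1}d^*_{q,n}g$ — the role of $d^*_{q,n}$ in the $q$-world is played here by $\Delta_{b,n}$, which is why the clean first identity survives unchanged.

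The second identity is the substantive one, and is where the correction term $f \boxplus_b^{n-1}\Delta^*_{b,n}g$ appears. Here the subtlety is that $xf$ has degree up to $n$, so $xf$ is no longer expressible purely in terms of basis elements of degree $\le n-1$; multiplying a rising factorial by $x$ does not keep it inside the rising-factorial basis. I would handle this by using the identity $x\cdot x(x+b)\cdots(x+(k-1)b) = x(x+b)\cdots(x+kb) - kb\cdot x(x+b)\cdots(x+(k-1)b)$ to re-expand $xf$ in the basis, which is precisely the mechanism that forces a splitting into a ``degree-raising'' part matching $x(f\boxplus_b^{n-1}\Delta_{b,n}g)$ and a ``degree-preserving'' remainder. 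That remainder must be matched against $f\boxplus_b^{n-1}\Delta^*_{b,n}g$, and this is where Lemma~\ref{b_dy_basis_lemma} does the real work: the coefficient $(n-k)a_k$ appearing in $(\Delta^*_{b,n}f)(x+b)$ is exactly the leftover weight produced by the $-kb$ term above once the binomial normalizations are accounted for.

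The main obstacle I anticipate is bookkeeping the constant $-kb$ shifts and the argument shift by $b$ (note that Lemma~\ref{b_dy_basis_lemma} describes $(\Delta^*_{b,n}f)(x+b)$, not $\Delta^*_{b,n}f$ itself) so that the raising/lowering of rising-factorial degrees lines up with the correct $\boxplus_b^{n-1}$ normalization. Concretely, I would prove the identity on a single pair of basis elements and then extend bilinearly, taking care that the degree-$n$ versus degree-$(n-1)$ normalization constants differ by a clean binomial ratio. I expect that once the action of $\boxplus_b^n$ on rising factorials is fixed, each identity becomes a one-line coefficient comparison, with the only genuine difficulty being the $x$-multiplication re-expansion in the second identity, which is exactly what manufactures the $\Delta^*_{b,n}$ correction term and thereby explains why — in contrast to the clean $q$-multiplicative recursion — the additive world requires carrying this extra operator through the induction.
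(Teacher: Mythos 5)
Your proposal is correct in substance, but it takes a genuinely different route from the paper's proof, so a comparison is in order. The paper never computes the action of $\boxplus_b^n$ on basis pairs: it keeps $f$ completely general, proves the Leibniz-type rule $\Delta_b^k(xf) = x\Delta_b^k f + k\Delta_b^{k-1}f(x-b)$, substitutes this into the defining sum $\sum_{k=0}^n \Delta_b^k(xf)\cdot(\Delta_b^{n-k}g)(0)$, reindexes, and only then invokes Lemma~\ref{b_dy_basis_lemma}, on the $g$-side alone, via the observation that $(\Delta_b^k g)(0)$ extracts the $k^{\text{th}}$ rising-factorial coefficient of $g$; a final appeal to shift-equivariance, $f(x-b)\boxplus_b^{n-1}h(x+b) = f\boxplus_b^{n-1}h$, closes the argument. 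Your double basis expansion also works, and it makes the origin of the correction term more transparent: writing $R_k(x) := x(x+b)\cdots(x+(k-1)b)$, the definition used in the paper yields $R_j \boxplus_b^n R_k = \frac{j!\,k!}{(j+k-n)!}\,R_{j+k-n}$ (and $0$ when $j+k<n$), and then with $\Delta_b R_k = kR_{k-1}$, your re-expansion $xR_m = R_{m+1} - mbR_m$, and $\Delta^*_{b,n}R_k = (n-k)R_k(x-b) = (n-k)(R_k - kbR_{k-1})$, both identities reduce to the numerical checks $(j+k-n+1)+(n-k) = j+1$ and $(j+k-n)+(n-k) = j$, which hold. Two points in your sketch deserve emphasis when writing it up. First, the basis-pair formula must carry \emph{no} $n$-dependent overall prefactor (unlike the usual $1/n!$-normalized finite free convolution): the first identity compares a level-$n$ convolution against a level-$(n-1)$ one, so any such prefactor would introduce a stray factor of $n$ and the identity would fail; verifying its absence from the paper's definition is exactly where your ``binomial ratio collapsing'' lives, and it is a real check, not a convention. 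Second, your anticipated obstacle about the shift is genuine but benign: Lemma~\ref{b_dy_basis_lemma} gives $(\Delta^*_{b,n}R_k)(x+b) = (n-k)R_k(x)$, so you must undo the shift via $R_k(x-b) = R_k - kbR_{k-1}$ (or invoke the same shift-equivariance the paper uses). In the end, the paper's route buys economy and generality (no explicit basis-pair formula, one reusable operator identity), while yours buys a purely mechanical, finite coefficient verification that explains structurally why multiplication by $x$ forces the $\Delta^*_{b,n}$ term into the additive recursion.
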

\begin{proof}
    The first identity is straightforward from the definition of $\boxplus_b^n$. As for the second, we compute:
    \[
        \Delta_b^k(xf) = \Delta_b^{k-1}(x\Delta_b f + f(x-b)) = \cdots = x\Delta_b^k f + k\Delta_b^{k-1}f(x-b)
    \]
    Notice that $\Delta_b$ commutes with shifting, so this is unambiguous. This implies:
    \[
        \begin{split}
            (xf) \boxplus_b^n g &= \sum_{k=0}^n (x\Delta_b^k f + k\Delta_b^{k-1}f(x-b)) \cdot (\Delta_b^{n-k} g)(0) \\
                &= x(f \boxplus_b^{n-1} \Delta_{b,n}g) + \sum_{k=1}^n k\Delta_b^{k-1}f(x-b) \cdot (\Delta_b^{n-k} g)(0) \\
                &= x(f \boxplus_b^{n-1} \Delta_{b,n}g) + \sum_{k=0}^{n-1} \Delta_b^{n-1-k}f(x-b) \cdot ((n-k)\Delta_b^{k} g)(0) \\
                &= x(f \boxplus_b^{n-1} \Delta_{b,n}g) + f(x-b) \boxplus_b^{n-1} (\Delta_{b,n}^* g)(x+b)
        \end{split}
    \]
    The last step of the above computation uses Lemma \ref{b_dy_basis_lemma} and the fact that $(\Delta_b^k g)(0)$ picks out the coefficient corresponding to the $k^\text{th}$ rising factorial term. Finally:
    \[
        f(x-b) \boxplus_b^{n-1} (\Delta_{b,n}^* g)(x+b) = (f \boxplus_b^{n-1} (\Delta_{b,n}^* g)(x+b))(x-b) = f \boxplus_b^{n-1} \Delta_{b,n}^* g
    \]
    This implies the second identity.
\end{proof}

With this we can now emulate Lamprecht's proof to prove interlacing preserving properties of the $b$-additive convolution.

\subsubsection*{Lamprecht-Style Proof}

\newtheorem*{binterlacing}{Theorem \ref{thm:binterlacing}}
\begin{binterlacing}
    Let $f,g \in \R_n[x]$ be strictly $b$-mesh polynomials of degree $n$. Let $T_g: \R_n[x] \to \R_n[x]$ be the real linear operator defined by $T_g: r \mapsto r \boxplus_b^n g$. Then, $T_g$ preserves interlacing with respect to $f$.
\end{binterlacing}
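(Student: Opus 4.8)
The plan is to mirror the structure of Lamprecht's multiplicative proof (Theorem \ref{thm:qinterlacing}) exactly, proceeding by induction on $n$ and reducing, via Corollary \ref{interlacing_op_cor}, to showing $f_{\alpha_k} \boxplus_b^m g \ll f \boxplus_b^m g$ for each simple root $\alpha_k$ of $f$. (Note that a strictly $b$-mesh polynomial of degree $n$ has $n$ simple real roots, so the hypotheses of the corollary are met.) The base case $n=1$ is immediate since $\boxplus_b^1$ coincides with the ordinary additive convolution. For the inductive step, I would use the recursive identities from the lemma above: writing $f(x) = (x-\alpha_k)f_{\alpha_k}(x)$ with $f_{\alpha_k} \in \R_{m-1}[x]$, the identities give
\[
    f \boxplus_b^m g = x(f_{\alpha_k} \boxplus_b^{m-1} \Delta_{b,m}g) + f_{\alpha_k} \boxplus_b^{m-1} \Delta_{b,m}^* g - \alpha_k (f_{\alpha_k} \boxplus_b^{m-1} \Delta_{b,m}g),
\]
while $f_{\alpha_k} \boxplus_b^m g = f_{\alpha_k} \boxplus_b^{m-1} \Delta_{b,m}g$. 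So the goal reduces to an interlacing relation between $f_{\alpha_k} \boxplus_b^{m-1}$ convolved against $\Delta_{b,m}g$ and against a combination of $\Delta_{b,m}g$ and $\Delta_{b,m}^* g$.

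The crucial departure from Lamprecht's argument is that $\Delta_{b,m}^* g$ is \emph{not} $b$-mesh, so I cannot directly feed it into the inductive hypothesis. This is where Corollary \ref{b_pol_deriv_cor} enters as the replacement for the role played by $d_{q,m}^*$: for $t$ large enough, the operator $(t\Delta_{b,m} + \Delta_{b,m}^*)g$ \emph{is} strictly $b$-mesh, of degree $m-1$, and satisfies $\Delta_{b,m}g \ll (t\Delta_{b,m} + \Delta_{b,m}^*)g$ strictly. Thus I would apply the inductive hypothesis with $g$ replaced by the genuinely $b$-mesh polynomial $(t\Delta_{b,m} + \Delta_{b,m}^*)g$, together with symmetry of $\boxplus_b^n$, to obtain
\[
    f_{\alpha_k} \boxplus_b^{m-1} (t\Delta_{b,m} + \Delta_{b,m}^*)g \ll f_{\alpha_k} \boxplus_b^{m-1} \Delta_{b,m}g,
\]
and by linearity of the convolution the left side equals $t(f_{\alpha_k} \boxplus_b^{m-1} \Delta_{b,m}g) + f_{\alpha_k} \boxplus_b^{m-1} \Delta_{b,m}^* g$. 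Rewriting this interlacing via the rule $u \ll v \iff v \ll -u$ and the fact that $af \ll bf$, I would extract the relation $f_{\alpha_k} \boxplus_b^{m-1} \Delta_{b,m}^* g \ll x(f_{\alpha_k} \boxplus_b^{m-1} \Delta_{b,m}g)$, analogously to the multiplicative proof's use of positivity of roots to introduce the factor of $x$.

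From there I would combine interlacing relations using the cone-closure property (if $p \ll q$ and $p \ll r$ then $p \ll aq + br$ for $a,b \geq 0$) to conclude
\[
    f_{\alpha_k} \boxplus_b^{m-1} \Delta_{b,m}^* g \ll x(f_{\alpha_k} \boxplus_b^{m-1} \Delta_{b,m}g) - \alpha_k(f_{\alpha_k} \boxplus_b^{m-1} \Delta_{b,m}g) + (\text{the } \Delta^*\text{-term}),
\]
which by the recursive identities is precisely $f_{\alpha_k} \boxplus_b^m g \ll f \boxplus_b^m g$. I expect the main obstacle to be bookkeeping the sign conventions and degree/leading-coefficient matching when converting the $b$-mesh interlacing from Corollary \ref{b_pol_deriv_cor} into the exact form needed, and in particular verifying that the extra additive $\Delta_{b,m}^*$ term appearing in the $(xf)$ identity (which has no analogue in the multiplicative case, where $(xf)\boxtimes_q^n g$ produces only a clean factor of $x$) is absorbed correctly rather than destroying the interlacing. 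Handling that extra term is exactly what the choice of a $t$-dependent polar derivative is designed to accomplish, so the proof hinges on threading Corollary \ref{b_pol_deriv_cor} through at the right place. As in the $q$-case, I would note that choosing $t$ large is harmless since $t$ may be selected after $g$ is fixed, and the inductive conclusion is an interlacing statement independent of $t$.
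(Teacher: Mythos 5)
Your high-level scaffolding matches the paper's proof exactly: induction on $n$, reduction via Corollary \ref{interlacing_op_cor} to showing $f_{\alpha_k} \boxplus_b^m g \ll f \boxplus_b^m g$, the two recursive identities, and Corollary \ref{b_pol_deriv_cor} serving as the $b$-mesh replacement for $d^*_{q,m}$ (with $t$ chosen large after $g$ is fixed). However, the core interlacing manipulation in your inductive step is broken, for three compounding reasons. First, your interlacing from the inductive hypothesis points the wrong way: Corollary \ref{b_pol_deriv_cor} gives $\Delta_{b,m}g \ll (t\Delta_{b,m}+\Delta^*_{b,m})g$, so the induction (applied with respect to the strictly $b$-mesh polynomial $\Delta_{b,m}g$, convolving with $f_{\alpha_k}$, plus symmetry) yields
\[
    f_{\alpha_k} \boxplus_b^{m-1} \Delta_{b,m}g \ll f_{\alpha_k} \boxplus_b^{m-1} (t\Delta_{b,m}+\Delta^*_{b,m})g,
\]
the reverse of what you wrote; two polynomials of equal degree cannot satisfy $\ll$ in both directions unless they are proportional. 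Second, the ``extraction'' of $f_{\alpha_k} \boxplus_b^{m-1} \Delta^*_{b,m}g \ll x(f_{\alpha_k} \boxplus_b^{m-1} \Delta_{b,m}g)$ is not a legitimate move: the rules $u \ll v \iff v \ll -u$ and $au \ll bu$ allow rescaling, not deleting a summand from one side of an interlacing relation, and the factor of $x$ in Lamprecht's multiplicative proof came from all roots being positive --- precisely the feature that $b$-mesh polynomials lack, since their roots may be negative. Third, your final display bounds the wrong polynomial: its left side is $f_{\alpha_k} \boxplus_b^{m-1} \Delta^*_{b,m}g$, but by the first recursive identity (which you state correctly earlier in your own write-up) $f_{\alpha_k} \boxplus_b^m g = f_{\alpha_k} \boxplus_b^{m-1} \Delta_{b,m}g$; in the additive setting the degree-reducing identity uses $\Delta$, not $\Delta^*$, unlike the multiplicative identity $f \boxtimes_q^n g = f \boxtimes_q^{n-1} d^*_{q,n}g$ you are imitating. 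So even granting your intermediate claims, you would have proven an interlacing for a polynomial other than $f_{\alpha_k} \boxplus_b^m g$.

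The repair is to keep $p := f_{\alpha_k} \boxplus_b^{m-1} \Delta_{b,m}g = f_{\alpha_k} \boxplus_b^m g$ on the left of every $\ll$. Alongside the correctly oriented inductive interlacing above, use the elementary fact that $p \ll (x-c)p$ for any real-rooted $p$ and any real $c$ (no positivity of roots needed), taking $c = \alpha_k + t$. The cone property then gives
\[
    p \ll (x-\alpha_k-t)p + f_{\alpha_k} \boxplus_b^{m-1} (t\Delta_{b,m}+\Delta^*_{b,m})g = (x-\alpha_k)p + f_{\alpha_k} \boxplus_b^{m-1} \Delta^*_{b,m}g = f \boxplus_b^m g,
\]
where the $t$-terms cancel by linearity of the convolution and the last equality is exactly your own first display. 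This is how the paper threads Corollary \ref{b_pol_deriv_cor} through the induction: the role of the large parameter $t$ is not to manufacture a factor of $x$, but to shift the linear factor in $p \ll (x-\alpha_k-t)p$ so that adding the polar-derivative convolution restores exactly $(x-\alpha_k)$.
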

\begin{proof}
    We prove the theorem by induction. For $n=1$ the result is straightforward, as $\boxplus_b^1 \equiv \boxplus^1$. For $m>1$, we inductively assume that the result holds for $n=m-1$. By Corollary \ref{interlacing_op_cor}, we only need to show that $T_g[f_{\alpha_k}] \ll T_g[f]$ for all roots $\alpha_k$ of $f$. That is, we want to show $f_{\alpha_k} \boxplus_b^m g \ll f \boxplus_b^m g$ for all $k$.
    
    By Proposition \ref{b_deriv_prop} and Corollary \ref{b_pol_deriv_cor}, we have that $\Delta_{b,m}g$ and $(t\Delta_{b,m}+\Delta_{b,m}^*)g$ are strictly $b$-mesh and $\Delta_{b,m}g \ll (t\Delta_{b,m}+\Delta_{b,m}^*)g$ strictly for large enough $t$. Further, $\Delta_{b,m}g$ and $(t\Delta_{b,m}+\Delta_{b,m}^*)g$ are of degree $m-1$. The inductive hypothesis and symmetry of $\boxplus_b^n$ then imply:
    \[
        f_{\alpha_k} \boxplus_b^{m-1} \Delta_{b,m}g \ll f_{\alpha_k} \boxplus_b^{m-1} (t\Delta_{b,m}+\Delta_{b,m}^*)g
    \]
    Considering the discussion near the beginning of the paper, we also have:
    \[
        f_{\alpha_k} \boxplus_b^{m-1} \Delta_{b,m}g \ll (x-\alpha_k-t)(f_{\alpha_k} \boxplus_b^{m-1} \Delta_{b,m}g)
    \]
    By properties of $\ll$, this gives:
    \[
        \begin{split}
            f_{\alpha_k} \boxplus_b^{m-1} \Delta_{b,m}g &\ll (x-\alpha_k-t)(f_{\alpha_k} \boxplus_b^{m-1} \Delta_{b,m}g) + f_{\alpha_k} \boxplus_b^{m-1} (t\Delta_{b,m}+\Delta_{b,m}^*)g \\
                &= (x-\alpha_k)(f_{\alpha_k} \boxplus_b^{m-1} \Delta_{b,m}g) + f_{\alpha_k} \boxplus_b^{m-1} \Delta_{b,m}^* g
        \end{split}
    \]
    By the above identities and the fact that $f(x) = (x-\alpha_k)f_{\alpha_k}(x)$, this is equivalent to $f_{\alpha_k} \boxplus_b^m g \ll f \boxplus_b^m g$.
\end{proof}

\newtheorem*{maincor}{Corollary \ref{thm:main}}
\begin{maincor}
    Let $f,g \in \R_n[x]$ be strictly $b$-mesh polynomials. Then, $f \boxplus_b^n g$ is $b$-mesh.
\end{maincor}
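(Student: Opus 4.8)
The plan is to prove the general statement (arbitrary strictly $b$-mesh polynomials, not necessarily of degree $n$) by reducing it to the degree-$n$ case, exactly as the $q$-multiplicative Corollary \ref{thm:qlogmesh} was reduced to Theorem \ref{thm:qinterlacing}. First I would handle the clean case: suppose $f,g \in \R_n[x]$ are both strictly $b$-mesh of full degree $n$. Since $f$ is strictly $b$-mesh, Proposition \ref{b_deriv_prop} (via the remark identifying strict $b$-mesh with $f \ll f(x-b)$ strictly) gives $f \ll f(x-b)$. I would then apply Theorem \ref{thm:binterlacing}, which says the operator $T_g : r \mapsto r \boxplus_b^n g$ preserves interlacing with respect to $f$. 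Applying this to the interlacing $f \ll f(x-b)$ yields
\[
    f \boxplus_b^n g \ll f(x-b) \boxplus_b^n g = (f \boxplus_b^n g)(x-b),
\]
where the last equality is the translation-equivariance of the $b$-additive convolution (shifting the input shifts the output). This is precisely the statement that $f \boxplus_b^n g$ is $b$-mesh, completing the full-degree case.

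The remaining work is the \emph{degree-reduction} argument, which mirrors the limiting argument at the end of the proof of Corollary \ref{thm:qlogmesh}. Suppose now $f$ has degree $m_f \leq n$ and $g$ has degree $m_g \leq n$. The idea is to append additional real roots, placed far out near $-\infty$, to inflate $f$ and $g$ up to full degree $n$ while preserving the strict $b$-mesh property. Concretely, I would set
\[
    F(x) := f(x) \prod_{j=m_f+1}^n (x - \alpha_j),
    \qquad
    G(x) := g(x) \prod_{j=m_g+1}^n (x - \beta_j),
\]
choosing the $\alpha_j, \beta_j$ to be sufficiently large negative numbers, mutually well-separated, so that $F$ and $G$ remain strictly $b$-mesh polynomials of degree $n$. (Unlike the multiplicative case, where $0$ and $\infty$ both play distinguished roles and roots must be added near both, here only $\infty$ is special, so I only need to push roots toward one end.) By the full-degree case just established, $F \boxplus_b^n G$ is $b$-mesh. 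Then, letting the appended roots $\alpha_j, \beta_j \to -\infty$ while maintaining the strict $b$-mesh condition, $F \boxplus_b^n G \to f \boxplus_b^n g$ root-wise (the convolution depends continuously on the coefficients of its inputs), and the $b$-mesh property is closed under such limits. Hence $f \boxplus_b^n g$ is $b$-mesh.

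The \textbf{main obstacle} I anticipate is the degree-reduction step, specifically verifying that the appended roots can genuinely be chosen to keep $F$ and $G$ strictly $b$-mesh of full degree \emph{and} that the limit is taken correctly. Two subtleties deserve care. First, appending a far-away root $x - \alpha_j$ with $\alpha_j$ very negative preserves the mesh condition because the new gap between consecutive roots dwarfs $b$; one must confirm the minimal root spacing of $F$ stays strictly greater than $b$, which holds for $\alpha_j$ sufficiently negative and sufficiently separated from the existing roots of $f$ and from each other. Second, the limit must be justified at the level of the convolution's coefficients: since $\boxplus_b^n$ is bilinear and the coefficients of $F, G$ converge (after suitable normalization) to those of $f, g$ as the spurious roots diverge, the coefficients of $F \boxplus_b^n G$ converge to those of $f \boxplus_b^n g$, and root-wise convergence follows by continuity of roots. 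The non-strict conclusion (merely $b$-mesh, not strictly) is exactly what one expects from such a limiting argument, since strictness can degrade in the limit.
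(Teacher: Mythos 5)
Your proposal is correct and follows essentially the same route as the paper: the full-degree case is handled identically by applying Theorem \ref{thm:binterlacing} to the strict interlacing $f \ll f(x-b)$ together with translation-equivariance of $\boxplus_b^n$, and the degree-reduction is the same root-appending-and-limiting argument the paper invokes (by reference to the proof of Corollary \ref{thm:qlogmesh}). Your write-up actually spells out the limiting step in more detail than the paper does, correctly noting that only one ``point at infinity'' needs attention in the additive setting and that the coefficients must be normalized before passing to the limit.
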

\begin{proof}
    First suppose $f,g$ are of degree $n$. Since $f \ll f(x-b)$ strictly, the previous theorem implies the following:
    \[
        f \boxplus_b^n g \ll f(x-b) \boxplus_b^n g = (f \boxplus_b^n g)(x-b)
    \]
    That is, $f \boxplus_b^n g$ is $b$-mesh.
    
    If $f,g$ are not of degree $n$, then the result follows by adding new roots and limiting them to $\infty$, in a fashion similar to the proof of Corollary \ref{thm:qlogmesh} given above.
\end{proof}

\section{Conclusion}

\subsubsection*{Extensions of other classical convolution results}
In this paper we investigate connections between the additive and multiplicative convolutions and their mesh generalizations. Looking forward, it is natural to look at other results in the classical case and ask for mesh generalizations. To the authors knowledge, there are two classical results which have been extended to mesh analogues: in \cite{finitemesh}, the authors explore extensions of the Hermite-Poulain theorem to the $1$-mesh world, and in \cite{lamprecht}, Lamprecht extends classical results for the multiplicative convolution to the $q$-log mesh world. 

An important related result in the classical case is the triangle inequality, which we discuss in \S\ref{sect:applications}. To our knowledge, there is not a known generalization of the triangle inequality to the mesh and log mesh cases. If one could establish such a result for the $q$-multiplicative convolution, it would automatically extend to the $b$-additive convolution using our analytic connection. 

\subsubsection*{Extensions of other $q-$multiplicative convolution results}

In addition to log mesh preservation, Lamprecht proves other results about the $q$-multiplicative convolution. Here we comment on these and their relation to the mesh world.

Beyond the finite degree case, Lamprecht discusses the extension of Laguerre-Polya functions to the $q$-multiplicative world, and then establishes a $q$-version of Polya-Schur multiplier sequences via a power series convolution. Since we are not aware of analogous power series results for the classical additive convolution, we have not explored the connections to the $b$-additive case.

Additionally, Lamprecht classifies log-concave sequences in terms of $q$-log mesh polynomials using the Hadamard product and a limiting argument. There might be an analogue result in the mesh world for concave sequences, but it is unclear what would take the place of the Hadamard product.



Lamprecht details the classes of polynomials that the $q$-multiplicative convolution preserves. Most of these results come from the presence of two poles in the $q$-multiplicative case, yielding derivative operators which preserve negative- and positive-rootedness respectively. The $b$-additive case does not have such complications. In our simplification of Lamprecht's argument, we assume the input polynomials to be generic (strictly $b$-mesh), and then limit to obtain the result for all $b$-mesh polynomials. By keeping track of boundary case information, Lamprecht is able to get more precise results about boundary elements of the space of $b$-mesh polynomials. We believe it is likely possible to emulate this in the above proof with more bookkeeping.

\subsubsection*{The analytic connection applied to other known classical results}

There are other results known about the classical multiplicative convolution which we believe could be transferred to the additive convolution using our generic framework. Specifically in \cite{finiteconvolutions}, Marcus, Spielman, and Srivastava establish a refinement of the triangle inequality for both the additive and multiplicative convolutions. These refinements parallel the well studied transforms from free probability theory. We have not yet worked out the details of this connection. 

\subsubsection*{Further directions for the generic analytic connection}

Finally, it is worth nothing that our analytic connection can only transfer results about the multiplicative convolution to the additive convolution. The main obstruction is finding the appropriate analogue to the exponential map. The following limiting connection between exponential polynomials and polynomials motivated our investigation:
\[
    \lim_{q\to1} \frac{1 - q^x}{1-q} = x
\]
Finding the appropriate ``logarithmic analogue'' could yield a way to pass results from the additive convolution to the multiplicative convolution. That said, some heuristic evidence suggests that such an analogue might not exist.

Above all, our analytic connection still remains rather mysterious. We suspect that there exists a more general theory which provides better intuition for this multiplicative-to-additive connection. While developing this connection, we found multiple candidate exponential maps which experimentally worked. We settled on the ones introduced in this paper due to their relatively nice combinatorial properties. Ideally, an alternative approach would avoid proving the result on a basis and better explain the role of these ``exponential maps''.

\bibliographystyle{amsalpha}
\bibliography{bibliography}



\end{document}